\newcommand\GreenL{\mathcal{L}}
\newcommand\GreenR{\mathcal{R}}
\newcommand\GreenH{\mathcal{H}}
\newcommand\GreenD{\mathcal{D}}
\newcommand\GreenJ{\mathcal{J}}
\newcommand\trop{\mathbb{T}}
\newcommand\ft{\mathbb{FT}}
\newcommand\pft{\mathbb{PFT}}
\newtheorem{theorem}{Theorem}[section]
\newtheorem{lemma}[theorem]{Lemma}
\newtheorem{corollary}[theorem]{Corollary}
\newtheorem{proposition}[theorem]{Proposition}
\begin{document}
\title{Tropical matrix groups}
\maketitle
\begin{center}
    ZUR IZHAKIAN\footnote{
Fachbereich Mathematik,
Universit\"at Bremen,
D-28359 Bremen,
Germany and School of Mathematical Sciencies, Tel Aviv University, Tel Aviv 69978,
Israel. Email \texttt{zzur@math.biu.ac.uk} and \texttt{zzur@post.tau.ac.il}. Research supported by the
Alexander von Humboldt Foundation.} ,
    MARIANNE JOHNSON\footnote{School of Mathematics, University of Manchester,
Manchester M13 9PL, England. Email
\texttt{Marianne.Johnson@maths.manchester.ac.uk}. Research supported by
EPSRC Grant EP/H000801/1.}
    and
    MARK KAMBITES\footnote{School of Mathematics, University of Manchester,
Manchester M13 9PL, England. Email \texttt{Mark.Kambites@manchester.ac.uk}.
Research supported by EPSRC Grant EP/H000801/1.
Mark Kambites gratefully acknowledges the hospitality of 
\textit{Universit\"at Bremen} during a visit to Bremen.}

\keywords{}
\thanks{}

\end{center}

\numberwithin{equation}{section}

\begin{abstract}
We study the subgroup structure of the semigroup of finitary tropical
matrices under multiplication. We show
that every maximal subgroup is isomorphic to the full linear automorphism
group of a related tropical polytope, and that each of these groups is the
direct product of $\mathbb{R}$ with a finite group. We also show that there
is a natural and canonical embedding of each full rank maximal subgroup
into the group of units of the semigroup of matrices over the tropical
semiring with $-\infty$. Our results have numerous
corollaries, including the fact that every automorphism of a
projective (as a module) tropical polytope of full rank extends to
an automorphism of the containing space, and that every full rank
subgroup has a common eigenvector.
\end{abstract}

\section{Introduction}

Tropical algebra is the algebra of the real numbers (sometimes augmented 
with an extra element denoted by $-\infty$) under the operations of 
addition and maximum. It has applications in areas such as combinatorial 
optimisation and scheduling, control theory, and algebraic geometry to 
name but a few (see \cite{Butkovic10} for a survey of applications). Many 
problems arising from these application areas are naturally expressed 
using (tropical) linear equations, so much of tropical algebra concerns 
matrices.

In this paper, we study the full semigroup of real $n \times n$ square
matrices with tropical multiplication. An important step in understanding
tropical
algebra is to understand the maximal subgroups of this semigroup, in terms
of both their abstract group structure and the geometry of their natural
actions on tropical space. It is a basic fact of semigroup theory that every 
subgroup of a semigroup $S$ lies in a unique maximal subgroup. Moreover, 
the maximal subgroups of $S$ are precisely the \emph{$\GreenH$-classes} 
(see Section~\ref{sec_Green} below for definitions) of $S$ which contain 
idempotents element. Recent research of the authors into the structure
of idempotent matrices \cite{K_puredim} provides a useful basis for
studying subgroups.

In addition to this introduction, this article comprises seven sections. 
In Section 2 we introduce some preliminary definitions. In Section 3 we 
give a brief account of Green's relations for the semigroup of $n \times 
n$ tropical matrices and prove that every maximal subgroup $H$ is 
isomorphic to the automorphism group of a particular tropical polytope
(namely, the column space of any element of $H$). In Sections 4 and 5 we
summarise a number of results on idempotent tropical matrices.

A consequence of \cite{K_puredim} is that there is an extremely well-behaved
notion of \textit{rank} for tropical idempotents, and hence for maximal subgroups
of tropical matrices. The understanding of idempotents arising from
\cite{K_puredim} is markedly more comprehensive when the idempotents in
question have full rank. Accordingly, Section 6 reduces the problem of
understanding maximal subgroups to the full rank case, by showing that every
rank $k$ maximal subgroup of the semigroup of $n \times n$ matrices
is naturally isomorphic to a maximal subgroup of the semigroup of
$k \times k$ matrices.

Finally, Section~7 establishes our strongest results. We exhibit a
natural and canonical embedding of each full rank maximal subgroup
(and hence of each subgroup) into the group of units of the corresponding
semigroup of matrices over the tropical semiring with $-\infty$.
An analysis of this embedding allows us to show that each maximal subgroup
of rank $k$ is the direct product of $\mathbb{R}$ with a finite group of
permutation degree $k$ or less. Other corollaries of our results include
that every automorphism of a projective tropical polytope extends to an
automorphism of the containing space, and that every subgroup has a common
eigenvector.

The decomposition of maximal subgroups as direct products of $\mathbb{R}$
with finite groups establishes, in the case of matrices with real entries,
a conjecture of the second and third authors \cite{K_tropicalgreen}, which
states that every group of $n \times n$ tropical matrices has a torsion free
abelian subgroup of index $n!$ or less. This conjecture has been independently
proved in the
general case by Shitov \cite{Shitov12}. It is natural to ask exactly which
finite groups arise in these decompositions. In a companion paper of the 
second and third authors \cite{K_finitemetric}, we shall show that for
every finite group $G$, the group $G \times \mathbb{R}$ arises as a
maximal subgroup in sufficiently high dimensions.

Of course, it is also natural to ask about the subgroup structure of the
full semigroup of $n \times n$ matrices over the larger tropical semiring
including $-\infty$. Indeed, there are already a few interesting results
in this direction \cite{K_tropicalgreen,Shitov12}. In this article we have
chosen to focus on matrices with real entries, partly to avoid
technicalities and partly because some of the machinery and results we
employed have not yet been developed for the case with $-\infty$. However,
we believe the results and methods developed here should, with careful
application, suffice to permit a full understanding of the subgroup
structure in the more general case.

\section{Preliminaries}
\label{prelim}
We write $\ft$ for the set $\mathbb{R}$ equipped with the operations of maximum (denoted by $\oplus$) and addition (denoted by $\otimes$, by $+$ or simply by juxtaposition). Thus, we write $a \oplus b = \max(a,b)$ and $a \otimes b = ab = a + b$. It is readily verified that $\ft$ is an abelian group (with neutral element $0$) under $\otimes$ and a commutative semigroup of idempotents (without a neutral element) under $\oplus$, and that $\otimes$ distributes over $\oplus$. These properties mean $\ft$ has the structure of an \textit{idempotent semifield}.

It will sometimes be convenient to work with the extended tropical semifield $\trop =\ft\cup\{-\infty\}$, where we extend the definitions of $\oplus$ and $\otimes$ in the obvious way (namely, $a \oplus -\infty = -\infty \oplus a = a$ and $a \otimes -\infty = -\infty \otimes a = -\infty$, for all $a \in \trop$).

Let $M_n(\ft)$ denote the set of all $n \times n$ matrices with entries in 
$\ft$. The operations $\oplus$ and $\otimes$ can be extended in the 
obvious way to give corresponding operations on $M_n(\ft)$. In particular, 
it is easy to see that $M_n(\ft)$ is a semigroup with respect to tropical 
matrix multiplication. We shall see in the following sections that this 
semigroup has a rich and interesting structure.

We shall be interested in the space $\ft^n$ consisting of $n$-tuples $x$ with entries in $\ft$; we
write $x_i$ for the $i$th component of $x$. We call $\ft^n$ \textit{(affine) tropical $n$-space}.
The space $\ft^n$ admits
an addition and a scaling action of $\ft$ given by $(x\oplus y)_i = x_i \oplus y_i$ and
$(\lambda x)_i = \lambda (x_i)$ respectively. These operations give $\ft^n$
the structure of an \textit{$\ft$-module}\footnote{Some authors use the term \textit{semimodule}, to
emphasise the non-invertibility of addition, but since no other kind of module exists over $\ft$
we have preferred the more concise term.}. It also
has the structure of a lattice, under the partial order given by $x \leq y$ if $x_i \leq y_i$ for all $i$.

From affine tropical $n$-space we obtain \textit{projective tropical $(n-1)$-space}, denoted $\pft^{n-1}$,
by identifying two vectors if one is a tropical multiple of the other by an element of $\ft$. We
identify $\pft^{n-1}$ with $\mathbb{R}^{n-1}$ via the map
$$(x_1, \ldots, x_n) \mapsto (x_1-x_n, x_2 -x_n, \ldots,  x_{n-1} - x_n).$$

Submodules of $\ft^n$ (that is, subsets closed under tropical addition and scaling) are termed
\textit{(tropical) convex sets}. Finitely generated convex sets are called \textit{(tropical) polytopes}.
Since convex sets are closed under scaling, each convex set $X \subseteq \ft^n$ induces a subset of
$\pft^{n-1}$, termed the \textit{projectivisation} of $X$ and denoted $\mathcal{P}X$.

For $A \in M_n(\ft)$ we let $R(A)$ denote the tropical polytope in $\ft^n$ generated by the rows of $A$ and let $C(A)$ denote the tropical polytope in $\ft^n$ generated by the columns of $A$. We call these tropical polytopes the \emph{row space} and \emph{column space} of $A$ respectively.

A point $x$ in a convex set $X$ is called \textit{extremal in $X$} if the set
$$X \smallsetminus \lbrace \lambda \otimes x: \lambda \in \ft \rbrace$$
is a submodule of $X$. Clearly some scaling of every such extremal point must lie in every generating set for
$X$. In fact, every tropical polytope is generated by its extremal points considered up to scaling \cite{Butkovic07,Wagneur91}.
\section{Green's relations, idempotents and regularity}
\label{sec_Green}
Green's relations are five equivalence relations ($\GreenL$, $\GreenR$, $\GreenH$, $\GreenD$ and $\GreenJ$) and three partial orders ($\leq_\GreenR$, $\leq_\GreenL$ and $\leq_\GreenJ$), which can be defined on any semigroup, and which describe the structure of its maximal subgroups and principal left, right and two-sided ideals. We briefly recap the definitions here; for further details (including proofs of the claimed properties) we refer the reader to an introductory text such as \cite{Howie95}.

Let $S$ be any semigroup. If $S$ is a monoid, we set $S^1 = S$, and otherwise we denote by $S^1$ the monoid obtained by adjoining a new identity element $1$ to $S$. We define a binary relation $\leq_\GreenR$ on $S$ by $a \leq_\GreenR b$ if $a S^1 \subseteq b S^1$, that is, if either $a = b$ or there exists $q$ with $a = bq$. We define another relation $\GreenR$ by $a \GreenR b$ if and only if $a S^1 = b S^1$. It is straight-forward to check that $\GreenR$ is an equivalence relation, and $\leq_\GreenR$ is a preorder (a reflexive, transitive binary relation) which induces a partial order on the $\GreenR$-equivalence classes.

The relations $\leq_\GreenL$ and $\GreenL$ are the left-right duals of 
$\leq_\GreenR$ and $\GreenR$ (that is, $a \leq_\GreenL b$ if $S^1 a 
\subseteq S^1 b$, and $a \GreenL b$ if $S^1 a = S^1 b$). The relations 
$\leq_\GreenJ$ and $\GreenJ$ are two-sided analogues ($a \leq_\GreenJ b$ 
if $S^1 a S^1 \subseteq S^1 b S^1$, and $a \GreenJ b$ if
$S^1 a S^1 = S^1 b S^1$). The relations $\GreenH$ and $\GreenD$ are
described in terms of 
the $\GreenL$ and $\GreenR$ relations. The $\GreenH$ relation is the 
intersection of $\GreenL$ and $\GreenR$ (that is, $a \GreenH b$ if $a 
\GreenL b$ and $a \GreenR b$), whilst the $\GreenD$ relation can be defined
by $a \GreenD b$ if and only if there 
exists an element $c \in S$ such that $a \GreenR c$ and $c \GreenL a$. It 
can be shown that both $\GreenH$ and $\GreenD$ are equivalence relations.

The study of Green's relations for the full tropical matrix semigroups was initiated (in the case of $M_2(\trop)$) by the second and third authors \cite{K_tropicalgreen}. In \cite{K_tropd}, Hollings and the third author gave a complete description of the $\GreenD$-relation for $M_n(\ft)$, using the \textit{duality} between the row and column space of a tropical matrix. In \cite{K_tropj} the second and third authors described the equivalence relation $\GreenJ$ and pre-order $\leq_\GreenJ$ in in $M_n(\ft)$ and $M_n(\trop)$. The main results of these papers, for the case $M_n(\ft)$, are summarised in the following theorem (see \cite[Proposition 3.1 and Theorem 5.1]{K_tropd} and \cite[Theorem 5.3 and Theorem 6.1]{K_tropj} for full details and proofs).

\begin{theorem}
\label{thm_greenchar}
Let $A, B \in M_n(\ft)$.
\begin{itemize}
\item[(i)] $A \leq_{\GreenL} B$ if and only if $R(A) \subseteq R(B)$;
\item[(ii)] $A \GreenL B$ if and only if $R(A) = R(B)$;
\item[(iii)] $A \leq_{\GreenR} B$ if and only if $C(A) \subseteq C(B)$;
\item[(iv)] $A \GreenR B$ if and only if $C(A) = C(B)$;
\item[(v)] $A \GreenH B$ if and only if $R(A) = R(B)$ and $C(A) = C(B)$;
\item[(vi)] $A \GreenD B$ if and only if  $C(A)$ and $C(B)$ are isomorphic as $\ft$-modules;
\item[(vii)] $A \GreenD B$ if and only if  $R(A)$ and $R(B)$ are isomorphic as $\ft$-modules;
\item[(viii)] $A \leq_{\GreenJ} B$ if and only if  there exists a convex set $X \subseteq \ft^n$ such that $R(A)$ embeds linearly into $X$ and $R(B)$ surjects linearly onto $X$;
\item[(ix)] $A \leq_{\GreenJ} B$ if and only if  there exists a convex set $X \subseteq \ft^n$ such that $C(A)$ embeds linearly into $X$ and $C(B)$ surjects linearly onto $X$.
\item[(x)] $A \GreenJ B$ if and only if $A \GreenD B$.
\end{itemize}
\end{theorem}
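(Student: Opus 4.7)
The plan is to split the ten statements into three groups according to the depth of the structural input required. Parts (i)--(v) reduce to the routine observation that in any tropical product $PQ$, each row of $PQ$ is a tropical linear combination of the rows of $Q$ using the corresponding row of $P$ as coefficients, and dually each column of $PQ$ is a tropical linear combination of the columns of $P$. For (i): $A = CB$ gives $R(A) \subseteq R(B)$ immediately; conversely, if $R(A) \subseteq R(B)$ then each row $a_i$ of $A$ may be written as $a_i = \bigoplus_k c_{ik} b_k$ with the coefficients $c_{ik}$ assembling into a matrix $C$ with $A = CB$. Parts (ii) and (iv) follow by applying (i) and (iii) in both directions, (iii) is the column-side dual of (i), and (v) is immediate from $\GreenH = \GreenL \cap \GreenR$.

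Parts (vi), (vii) and (x) hinge on the row--column duality of \cite{K_tropd}, which assigns to every matrix $A$ a canonical $\ft$-module anti-isomorphism between $R(A)$ and $C(A)$ and, crucially, guarantees that $R(A) \cong R(B)$ if and only if $C(A) \cong C(B)$. Granted this, (vi) and (vii) are equivalent. For (vi) itself: if $A \GreenD B$ with witness $C$ satisfying $A \GreenR C \GreenL B$, then parts (ii) and (iv) give $C(A) = C(C)$ and $R(C) = R(B)$, and duality yields $C(A) = C(C) \cong R(C) = R(B) \cong C(B)$. Conversely, given a module isomorphism $f \colon C(B) \to C(A)$, we form $C$ by taking its columns to be $f$ applied to the columns of $B$; this gives $C(C) = C(A)$, so $A \GreenR C$, and duality upgrades $f$ to an isomorphism of row spaces showing $R(C) = R(B)$, so $C \GreenL B$. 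Part (x) then follows from (viii)--(ix) together with the fact that, in the finitely generated setting, mutual linear embeddings and surjections between convex sets of the same module-theoretic rank are forced to be isomorphisms.

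For the forward direction of (viii), if $A = UBV$ take $X = R(BV) \subseteq \ft^n$. Right multiplication by $V$ defines a linear surjection $R(B) \twoheadrightarrow R(BV) = X$, while $R(A) = R(UBV) \subseteq R(BV) = X$ by the observation of the first paragraph; part (ix) is the exact column-side dual. The principal obstacle is the converse of (viii)--(ix). Here one must construct, from a convex set $X \subseteq \ft^n$ together with a linear embedding of $R(A)$ into $X$ and a linear surjection of $R(B)$ onto $X$, matrices $U, V \in M_n(\ft)$ with $A = UBV$. The plan is to encode the surjection $R(B) \twoheadrightarrow X$ as a matrix equation $BV = W$, where $W$ is an $n \times n$ matrix whose rows are the images of the rows of $B$ under the surjection, so that $R(W) = X$; then part (i) applied to the inclusion $R(A) \subseteq X = R(W)$ produces $U$ with $A = UW = UBV$. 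The delicate step is realising the surjection by such a matrix $V$, since a linear map defined only on a submodule of $\ft^n$ need not extend as a matrix-induced map; this is where extremal-generator techniques and the combinatorics of tropical polytopes enter, and represents the main technical content of \cite{K_tropj}.
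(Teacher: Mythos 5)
First, note that the paper does not prove this theorem: it is explicitly presented as a summary of results imported from \cite{K_tropd} and \cite{K_tropj}, so there is no internal proof to compare against. Measured against the proofs in those references, your treatment of (i)--(v) is correct and complete: the observation that rows of $PQ$ are tropical linear combinations of rows of $Q$, together with the fact that any element of a polytope generated by finitely many real vectors can be written as $\bigoplus_k c_k b_k$ with all $c_k$ finite (pad the unused coefficients with sufficiently negative reals), is exactly the ``straight-forward'' argument the paper alludes to. Your outline for (vi), (vii) and the forward directions of (viii), (ix) also matches the shape of the cited proofs, with the caveat that the duality between $R(A)$ and $C(A)$ is an order-\emph{reversing} correspondence, so the chain $C(A)=C(C)\cong R(C)=R(B)\cong C(B)$ should be read as a composite of two anti-isomorphisms (which is fine, but worth saying). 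The converse of (viii)--(ix) you explicitly and honestly defer to \cite{K_tropj}; as a self-contained proof that is a hole, but you have correctly located where the work lies (realising an abstract linear surjection defined on a submodule of $\ft^n$ by an actual matrix).

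The genuine gap is in (x). You reduce it to the assertion that ``mutual linear embeddings and surjections between convex sets of the same module-theoretic rank are forced to be isomorphisms,'' but this is neither an elementary fact nor even the right reduction. From $A \GreenJ B$ and parts (viii)--(ix) one obtains a convex set $X$ with $R(A)\hookrightarrow X$ and $R(B)\twoheadrightarrow X$, and a second convex set $Y$ with the roles reversed; this does \emph{not} yield a mutual embedding or mutual surjection between $R(A)$ and $R(B)$ themselves, since one cannot compose an embedding into $X$ with a surjection onto $X$ (there is no section available in general, projectivity being exactly the issue). Bridging this gap --- showing that the existence of such $X$ and $Y$ forces $R(A)\cong R(B)$ --- is the main theorem of \cite{K_tropj}, and the proof there is not formal: it uses the monotonicity of tropical (topological) dimension under linear surjections and injections, i.e.\ the ``elementary topological arguments'' the paper mentions after the theorem statement. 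As written, your one-sentence justification of (x) assumes essentially the full strength of the result being proved.
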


Parts (i)-(v) of the above theorem are straight-forward, whilst parts (vi)-(x) require considerably more work (the proofs make use of the row-column duality alluded to above as well as some elementary topological arguments).

The following result follows immediately from the definitions. 

\begin{lemma}
\label{lem_relations}
Let $A,B \in M_n(\ft)$.
\begin{itemize}
\item[(i)] If $A \leq_{\GreenL} B$ then any linear relation between the \emph{columns} of $B$ induces the same relation between the \emph{columns} of $A$.
\item[(ii)] If $A \leq_{\GreenR} B$ then any linear relation between the \emph{rows} of $B$ induces the same relation between the \emph{rows} of $A$.
\end{itemize}
\end{lemma}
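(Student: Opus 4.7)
The plan is to deduce the result directly from the definition of the Green preorders together with the distributivity of tropical matrix multiplication over tropical addition. For part (i), unpacking the definition of $\leq_{\GreenL}$ shows that $A \leq_{\GreenL} B$ means $A \in S^1 B$, so either $A = B$ (in which case the conclusion is trivial) or there exists $X \in M_n(\ft)$ with $A = XB$.

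In the nontrivial case, denote the $j$th columns of $A$ and $B$ by $a^{(j)}$ and $b^{(j)}$ respectively. Since $A = XB$, we have $a^{(j)} = X \otimes b^{(j)}$ for every $j$. Now left multiplication by $X$ is an $\ft$-linear operation on $\ft^n$; that is, for any $\lambda_j \in \ft$ and any columns $v_j \in \ft^n$,
\begin{equation*}
X \otimes \left( \bigoplus_j \lambda_j \otimes v_j \right) \;=\; \bigoplus_j \lambda_j \otimes (X \otimes v_j).
\end{equation*}
Therefore, given any linear relation $\bigoplus_j \lambda_j \otimes b^{(j)} = \bigoplus_j \mu_j \otimes b^{(j)}$ between the columns of $B$, applying $X$ on the left to both sides yields the identical relation $\bigoplus_j \lambda_j \otimes a^{(j)} = \bigoplus_j \mu_j \otimes a^{(j)}$ between the columns of $A$.

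Part (ii) is obtained by the left-right dual argument: $A \leq_{\GreenR} B$ gives $A = BY$ for some $Y \in M_n(\ft)$ (or $A = B$), the $i$th row of $A$ is the $i$th row of $B$ right-multiplied by $Y$, and right multiplication by $Y$ is an $\ft$-linear operation on row vectors, so any linear relation between rows of $B$ transfers to the same linear relation between rows of $A$.

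There is no real obstacle here; the lemma is essentially a restatement of the fact that left (resp.\ right) multiplication by a fixed matrix is an $\ft$-module homomorphism on columns (resp.\ rows), which is exactly the distributivity axiom of the matrix semiring action. This is why the authors flag it as following immediately from the definitions.
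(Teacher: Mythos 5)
Your proof is correct and follows essentially the same route as the paper: write $A = XB$ (or note the trivial case $A = B$), observe that left multiplication by $X$ is a linear map carrying the $j$th column of $B$ to the $j$th column of $A$, and conclude that it transports any linear relation between the columns. Your version merely makes the distributivity computation explicit where the paper leaves it implicit.
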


\begin{proof}
We prove (i), the proof of (ii) being dual. Let $A \leq_{\GreenL} B$. If $A = B$, the result holds trivially. Assume then that $A= XB$ for some $X \in M_n(\ft)$. Define $f:C(B)\rightarrow C(A)$ to be left multiplication by $X$. Thus $f$ is a linear map sending the $i$th column of $B$ to the $i$th column of $A$ and it follows that any relation between the columns of $B$ induces the corresponding relation between the columns of $A$.
\end{proof}

In fact, there is a correspondence between inclusions of row spaces and certain surjections of column spaces; see \cite{K_tropd} for full details. 

\begin{theorem}
\label{thm_HK}\cite[Theorem 4.2, Corollary4.3]{K_tropd}.
\begin{itemize}
\item[(i)] $R(A)\subseteq R(B)$ if and only if there there is a surjective linear morphism from $C(B)$ to $C(A)$ taking the $i$th column of $B$ to the $i$th column of $A$ for all $i$.
\item[(ii)] $R(A) =R(B)$ if and only if there is a linear isomorphism from $C(A)$ to $C(B)$ taking the $i$th column of $B$ to the $i$th column of $A$ for all $i$.
 \item[(iii)] $C(A)\subseteq C(B)$ if and only if there there is a surjective linear morphism from $R(B)$ to $R(A)$ taking the $i$th row of $B$ to the $i$th row of $A$ for all $i$.
\item[(iv)] $C(A) =C(B)$ if and only if there is a linear isomorphism from $R(A)$ to $R(B)$ taking the $i$th row of $B$ to the $i$th row of $A$ for all $i$.
    \end{itemize}
\end{theorem}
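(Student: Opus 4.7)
The plan is to prove (i) first, obtain (ii) by a symmetry argument, and deduce (iii) and (iv) by applying (i) and (ii) to transposes. For the forward direction of (i), suppose $R(A) \subseteq R(B)$. By Theorem~\ref{thm_greenchar}(i) this gives $A \leq_{\GreenL} B$, so either $A = B$ or $A = XB$ for some $X \in M_n(\ft)$. Left multiplication by $X$ (or by the identity on $C(B)$ in the case $A = B$) is a linear map of $\ft^n$ sending each $Be_j$ to $XBe_j = Ae_j$, and hence restricts to a map $f : C(B) \to C(A)$ with the claimed property. Surjectivity of $f$ is then immediate, since its image is a submodule of $C(A)$ containing every generating column of $A$.

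For the converse of (i), suppose $f : C(B) \to C(A)$ is a surjective linear map sending $Be_j$ to $Ae_j$ for each $j$. Every element of $C(B)$ has the form $Bu = \bigoplus_j u_j (Be_j)$ with $u \in \ft^n$, so linearity forces $f(Bu) = \bigoplus_j u_j Ae_j = Au$ for all $u \in \ft^n$. It then suffices to produce a matrix $X \in M_n(\ft)$ with $A = XB$, whence $R(A) \subseteq R(B)$ by Theorem~\ref{thm_greenchar}(i). The natural candidate is the residual matrix $X_{ij} = \bigwedge_m (A_{im} - B_{jm})$, which is the greatest matrix satisfying $XB \leq A$ entrywise. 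The main obstacle is verifying the reverse inequality $XB \geq A$; I would do this by exploiting the identity $f(Bu) = Au$ together with the standard tropical Galois correspondence, or alternatively by invoking the row-column duality machinery developed in \cite{K_tropd}.

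Part (ii) follows from (i) by a symmetry argument: if $R(A) = R(B)$, applying (i) in both directions yields surjective linear maps $f : C(B) \to C(A)$ and $g : C(A) \to C(B)$ respecting the column labelling, and the composites $g \circ f$ and $f \circ g$ are linear self-maps of $C(B)$ and $C(A)$ that agree with the identity on the generating sets of columns, hence equal the identity by linearity; so $f$ and $g$ are mutually inverse isomorphisms. Finally, (iii) and (iv) follow by applying (i) and (ii) to the transposed matrices $A^T$ and $B^T$, using the identities $R(A^T) = C(A)$ and $C(A^T) = R(A)$, together with the observation that $A = XB$ if and only if $A^T = B^T X^T$.
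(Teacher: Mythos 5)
The paper itself offers no proof of Theorem~\ref{thm_HK}: it is imported wholesale from \cite{K_tropd}, so there is nothing internal to compare your argument against, and it must be judged on its own merits. Your forward direction of (i) is correct (it is essentially the proof of Lemma~\ref{lem_relations} in the paper), and your reductions of (ii) to (i) and of (iii), (iv) to (i), (ii) via transposition are all sound. The problem is the converse of (i), which is where the entire content of the theorem lives, and which you have explicitly left open. Defining the residual matrix $X_{ij}=\bigwedge_m(A_{im}-B_{jm})$ and observing $XB\leq A$ costs nothing and uses none of the hypotheses; the inequality $XB\geq A$ is precisely the duality statement you are being asked to prove, so ``invoking the row-column duality machinery developed in \cite{K_tropd}'' is circular (Theorem 4.2 of that paper \emph{is} this statement), and ``the standard tropical Galois correspondence'' only ever yields the $\leq$ direction. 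As written, the hard half of the theorem is asserted, not proved.

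The gap can be closed along the lines you set up, but the argument has to actually use the linearity of $f$. From $f(Bu)=Au$ one gets the key monotonicity: if $Bu\leq Bv$ then $B(u\oplus v)=Bv$, so $A(u\oplus v)=Av$, i.e.\ $Au\leq Av$. Now fix $i$ and $m$ and suppose, for contradiction, that no $j$ satisfies $B_{jm}-B_{jl}\geq A_{im}-A_{il}$ for all $l$ (this is the failure of $(XB)_{im}\geq A_{im}$). Put $u_l=A_{im}-A_{il}$. Then $(Au)_i=A_{im}$, while for every $j$ there is some $l$ with $B_{jl}+u_l>B_{jm}$, so $Bu$ strictly exceeds the $m$th column $B_m$ of $B$ in every coordinate; hence $Bu\geq \epsilon\otimes B_m$ for some $\epsilon>0$. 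Writing $v_l=\min_p(B_{pm}-B_{pl})$ one checks $Bv=B_m$, so $Bu\geq B(\epsilon\otimes v)$, and monotonicity together with $Av=f(Bv)=f(B_m)=A_m$ gives $Au\geq\epsilon\otimes A_m$, whence $(Au)_i\geq \epsilon+A_{im}>A_{im}$, a contradiction. With this inserted, your outline becomes a complete and genuinely elementary proof; without it, the proposal does not establish the theorem.
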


We shall require a few more semigroup theoretic definitions. Let $S$ be any semigroup. We recall that $e \in S$ is an \emph{idempotent} if $e^2 =e$, whilst $a \in S$ is \emph{(von Neumann) regular} if there exists $x \in S$ with $axa=a$. It is easy to see that $a$ is regular if and only if $a$ is $\GreenR$-related to an idempotent (dually, if and only if $a$ is $\GreenL$-related to an idempotent). Moreover, it is well known that every subgroup of a semigroup $S$ lies in a unique maximal subgroup, and that the maximal subgroups are precisely the $\GreenH$-classes of $S$ containing
idempotents.

A complete description of the idempotent elements of $M_2(\ft)$ was given in \cite{K_tropicalgreen} and it follows from the results given there that the semigroup $M_2(\ft)$ is regular (that is, every $\GreenR$ class and every $\GreenL$ class contains an idempotent) and each maximal subgroup is isomorphic to either $\mathbb{R}$ or $\mathbb{R} \times S_2$. For $n \geq 3$ it is known that the semigroups $M_n(\ft)$ are no longer regular. In \cite{K_puredim} the present authors gave a geometric characterisation of the regular elements of $M_n(\ft)$. In the present paper we turn our attention to the maximal subgroups of $M_n(\ft)$, in other words, the $\GreenH$-classes containing an idempotent tropical matrix. Given an idempotent tropical matrix $E$, we first note that the $\GreenH$-class containing $E$ is isomorphic to the group of $\ft$-module automorphisms of $C(E)$.
\begin{theorem}
\label{thm_aut}
Let $E$ be an idempotent in $M_n(\ft)$, with corresponding $\GreenH$-class
denoted by $H_E$, and let ${\rm Aut}(C(E))$ denote the group of $\ft$-module automorphisms of $C(E)$. Define $\psi: {\rm Aut}(C(E)) \rightarrow H_E$ by
$$\psi: f \mapsto (f(E_1) \cdots f(E_n)),$$
where $E_i$ denotes the $i$th column of $E$. Then $\psi$ is an isomorphism of groups. [Dually, the $\GreenH$-class of $E$ is isomorphic to the group of $\ft$-module automorphisms of $R(E)$.]
\end{theorem}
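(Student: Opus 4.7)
The plan is to verify the four properties required of a group isomorphism: that $\psi$ lands in $H_E$, is multiplicative, is injective and is surjective. Theorems \ref{thm_greenchar}(v) and \ref{thm_HK}(ii) will do most of the work, together with the basic observation that since $E$ is idempotent, $Ew = w$ for every $w \in C(E)$ (because $w = Eu$ gives $Ew = E^2u = Eu = w$).

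\textbf{Well-definedness and injectivity.} Given $f \in {\rm Aut}(C(E))$, set $A = \psi(f)$, so the $i$th column of $A$ is $A_i = f(E_i)$. Since $f$ is a linear surjection of $C(E)$ onto itself sending generators to generators, $C(A) = C(E)$. At the same time, $f: C(E) \to C(A)$ is a linear isomorphism taking $E_i$ to $A_i$, so Theorem~\ref{thm_HK}(ii) yields $R(A) = R(E)$; by Theorem~\ref{thm_greenchar}(v), $A \in H_E$. Injectivity is then immediate, since $f$ is determined on the generating set $\{E_1,\ldots,E_n\}$ of $C(E)$ by its values on the columns of $\psi(f)$.

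\textbf{Multiplicativity.} I would establish the key formula $\psi(f)v = f(Ev)$ for every $v \in \ft^n$: expanding matrix-vector product, $\psi(f)v = \bigoplus_j v_j\, f(E_j) = f\bigl(\bigoplus_j v_j E_j\bigr) = f(Ev)$ by linearity of $f$ (note $Ev \in C(E)$ so the last expression is defined). Applying this twice and using that $E$ fixes $C(E)$ pointwise gives
$$\psi(f)\psi(g)v = f\bigl(E \cdot \psi(g)v\bigr) = f\bigl(\psi(g)v\bigr) = f(g(Ev)) = (f \circ g)(Ev) = \psi(f\circ g)v,$$
since $\psi(g)v = g(Ev) \in C(E)$. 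As $v$ was arbitrary, $\psi(f)\psi(g) = \psi(f\circ g)$.

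\textbf{Surjectivity.} For $B \in H_E$, Theorem~\ref{thm_greenchar}(v) gives $R(B) = R(E)$ and $C(B) = C(E)$. Applying Theorem~\ref{thm_HK}(ii) to $R(B) = R(E)$ produces a linear isomorphism $f$ between $C(E)$ and $C(B) = C(E)$ carrying $E_i$ to $B_i$ for each $i$; this $f \in {\rm Aut}(C(E))$ satisfies $\psi(f) = B$. The dual statement, identifying $H_E$ with ${\rm Aut}(R(E))$, will follow by the same argument with rows replacing columns and Theorem~\ref{thm_HK}(iv) replacing (ii). I expect the main obstacle to be the multiplicativity step: one has to recognise that the correct way to convert a composition of abstract module automorphisms into a product of matrices is via the identity $\psi(f)v = f(Ev)$, and then to exploit the idempotence of $E$ to absorb the extra factor of $E$ that appears when the formula is iterated.
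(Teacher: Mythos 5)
Your proof is correct, and three of its four steps (well-definedness via Theorem~\ref{thm_HK}(ii) and Theorem~\ref{thm_greenchar}(v), injectivity from the columns generating $C(E)$, and multiplicativity via the identity $\psi(f)v = f(Ev)$, which is just the paper's observation that $\psi(f)$ acts on $C(E)$ as $f$ does, combined with $Ew=w$ on $C(E)$) coincide with the paper's argument. The one genuine difference is surjectivity: the paper takes $A \in H_E$, defines $f(x) = A\otimes x$, and uses the group inverse $A' \in H_E$ (with $AA'=A'A=E$) to verify that $f$ is a bijection of $C(E)$; you instead apply Theorem~\ref{thm_HK}(ii) to the equality $R(B)=R(E)$ to conjure the required automorphism carrying $E_i$ to $B_i$ directly. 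Your route is slightly more economical in that it reuses the same duality theorem already invoked for well-definedness and does not rely on knowing in advance that $H_E$ is a group, only that $B\,\GreenH\,E$; the paper's route has the advantage of producing the explicit description $f(x)=A\otimes x$ of the preimage, which is the form in which the result is reused later (e.g.\ in the proof of Theorem~\ref{thm_upstairs}). Either way the argument closes, so this is a stylistic rather than substantive divergence.
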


\begin{proof}
Let $f: C(E) \rightarrow C(E)$ be an $\ft$-module automorphism and let $A = \psi(f)$. Since $f$ is surjective, it is clear that $C(A)=C(E)$ and hence $A \GreenR E$, by Theorem~\ref{thm_greenchar}~(iii). Now, since $f$ is a linear isomorphism of column spaces, taking the $i$th column of $E$ to the $i$th column of $A$, it follows from Theorem~\ref{thm_HK}~(ii) that $R(A)=R(E)$, giving $A \GreenL E$ and hence $A \GreenH E$. Thus $\psi$ is well-defined.

We claim that $\psi$ is a homomorphism of groups. Indeed, let $f, g$ be $\ft$-module automorphisms of $C(E)$ and let $\psi(f)=A$ and $\psi(g)=B$. Then it is straight-forward to check that $f(x) = A \otimes x$ and $g(x) = B \otimes x$, for all $x \in C(E)$. Moreover, since $B \GreenH E$, we have $B \otimes E = B$ giving
\begin{eqnarray*}
\psi(f \circ g) &=& (f\circ g(E_1) \cdots f\circ g(E_n))\\
&=&(A \otimes B\otimes E_1 \cdots A \otimes B\otimes E_n)\\
&=&A \otimes B\otimes E = A \otimes B = \psi(f )\otimes \psi(g).
\end{eqnarray*}
In fact, we shall show that $\psi$ is an isomorphism.

We show first that $\psi$ is injective. Indeed, suppose $\psi(f) = \psi(g)$.
Then $f(E_i) = g(E_i)$ for all $i$. But the columns $E_i$ generate $C(E)$,
so by linearity it must be that $f = g$.

It remains to show that this homomorphism is surjective.
Let $A \in H_E$ and define $f: C(E) \rightarrow C(E)$ by $f(x) = A \otimes x$.
Since $H_E$ is a group, there exists $A' \in H_E$ such that
$A\otimes A' = E = A' \otimes A$. Since $E$ acts as the identity on $C(E)$,
it follows that $f$ must be bijective; define $f': C(E) \rightarrow C(E)$ by $f'(x) = A' \otimes x$ and suppose for contradiction that $f(x) = f(y)$ for some $x \neq y$. Then
$$x = E \otimes x = A' \otimes A \otimes x = f'(f(x)) = f'(f(y)) = A' \otimes A \otimes y = E \otimes y = y,$$
contradicting $x \neq y$. Thus $f \in {\rm Aut}(C(E))$ and it is clear that $\psi(f) = A$, giving that $\psi$ is surjective.
\end{proof}

\section{Dimension, projectivity, idempotents and regularity}
There are several important notions of dimension for a tropical convex set $X \subseteq \ft^n$. The \emph{tropical dimension} is the topological dimension of $X$, viewed as a subset of $\mathbb{R}^n$ with the usual topology. Note that, in contrast to the classical (Euclidean) case, tropical convex sets may have regions of different
topological dimension. We say that $X$ has \emph{pure} dimension $k$ if every open (within X with the induced topology) subset of X has topological dimension $k$. The \emph{generator dimension} of $X$ is the minimal cardinality of a generating subset, under the linear operations of scaling and addition. If $X$ is a polytope, this is equal to the number of extremal points of $X$ considered up to scaling \cite{Butkovic07,Wagneur91}. The \emph{dual dimension}~\cite{K_puredim} is the minimal cardinality of a generating set under scaling and the induced operation of greatest lower bound within the convex set. (Notice that, in general, the greatest lower bound of two elements within a convex set $X$ need not be the same as their component-wise minimum, which may not be contained in $X$.)

In \cite{K_puredim}, the present authors gave a characterisation of \emph{projectivity} for tropical polytopes in terms of the geometric and order-theoretic structure on these sets. We briefly recall that a module $P$ is called \emph{projective} if every morphism from $P$ to another module $M$ factors through every surjective module morphism onto $M$. One of the main results of \cite{K_puredim} can be summarised as follows.

\begin{theorem}\cite[Theorems 1.1 and 4.5]{K_puredim}.
\label{thm_IJKmain}
Let $X\subseteq \ft^n$ be a tropical polytope. Then the following are equivalent:
\begin{itemize}
\item[(i)] $X$ is projective as an $\ft$-module;
\item[(ii)] $X$ is the column space of an idempotent matrix in $M_n(\ft)$;
\item[(iii)] $X$ has pure dimension equal to its generator dimension and dual dimension.
\end{itemize}
\end{theorem}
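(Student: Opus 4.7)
The plan is to separate the algebraic equivalence (i) $\Leftrightarrow$ (ii), which is essentially formal, from the more delicate geometric equivalence (ii) $\Leftrightarrow$ (iii).

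For (ii) $\Rightarrow$ (i), I would argue that if $X = C(E)$ with $E^2 = E$, then left multiplication by $E$ gives a linear endomorphism of $\ft^n$ whose image is $X$. The relation $E \otimes E_i = E_i$ for each column (a direct consequence of $E^2 = E$) together with linearity forces this map to restrict to the identity on $X$. Thus $X$ is a retract of the free module $\ft^n$, and a standard diagram chase then delivers projectivity: factor a morphism out of $X$ through $\ft^n$ via the retraction, lift through any surjection using freeness of $\ft^n$, and restrict back.

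For (i) $\Rightarrow$ (ii), I would first establish that a projective polytope $X \subseteq \ft^n$ admits a generating set of size at most $n$, so that one can fix a surjective linear map $\pi \colon \ft^n \to X$. Projectivity then lifts $\text{id}_X$ through $\pi$ to produce a section $s \colon X \to \ft^n$, and the composition $E = s\pi$ is idempotent in $M_n(\ft)$. The subtle point to manage is arranging $C(E) = X$ as a subset of $\ft^n$ rather than a linearly isomorphic translate; this can be accomplished by choosing $\pi$ so that $\pi|_X = \text{id}_X$ from the outset, a condition one can ensure via a careful choice of generating set and matching values for $\pi$ on the standard basis vectors of $\ft^n$.

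The equivalence (ii) $\Leftrightarrow$ (iii) is where the main obstacle lies. For (ii) $\Rightarrow$ (iii), I would examine the columns of an idempotent $E$: they form a canonical generating set for $C(E)$, and the idempotence relation should force this set to be both extremal (pinning down the generator dimension) and closed under the greatest-lower-bound operation inside $C(E)$ (pinning down the dual dimension). Purity of topological dimension should then follow from the fact that $E$ retracts the full ambient module, which precludes the appearance of low-dimensional spines in $C(E)$. The converse (iii) $\Rightarrow$ (ii) is the hardest step: given the matching of all three dimensions, I would extract extremal points $v_1, \ldots, v_k$ for $X$ alongside a dual generating family $u_1, \ldots, u_k$, and use them to assemble an $n \times n$ matrix realising a tropical projection onto $X$ along the $u_i$. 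The matching dimensions are exactly what is needed to ensure that this projection both surjects onto $X$ and takes values inside $X$, while purity guards against pathological behaviour on lower-dimensional strata.
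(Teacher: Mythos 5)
First, a point of reference: the paper does not prove this statement at all — it is imported verbatim from \cite{K_puredim} (Theorems 1.1 and 4.5 there), so your plan can only be measured against that paper's argument. Your outline of (i) $\Leftrightarrow$ (ii) has the right overall shape (idempotents correspond to linear retractions of the ambient module onto their column spaces), but it rests on a false premise: $\ft^n$ is \emph{not} a free $\ft$-module. Since $-\infty \notin \ft$ there are no unit vectors; indeed $\ft^n$ is not even finitely generated as an $\ft$-module for $n \geq 2$, and the free module on $n$ generators is naturally $\trop^n$ with the all-$(-\infty)$ vector removed, not $\ft^n$. So the step ``lift through any surjection using freeness of $\ft^n$'' does not go through; one needs the genuinely non-obvious fact that $\ft^n$ is projective, which is a separate theorem in \cite{K_puredim}. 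In the direction (i) $\Rightarrow$ (ii) there are two further gaps. The claim that a projective polytope in $\ft^n$ admits at most $n$ generators is essentially a consequence of (iii) and cannot be assumed at this stage. And even granting a surjection $\pi\colon \ft^n \to X$ with $\pi|_X = \mathrm{id}_X$, the lift $s$ of $\mathrm{id}_X$ supplied by projectivity satisfies only $\pi s = \mathrm{id}_X$; it need not be the inclusion, so $C(s\pi) = s(X)$ is a priori only isomorphic to $X$ rather than equal to it. (Note also that if such a $\pi$ were already known to be given by a matrix, that matrix would itself be the required idempotent, so the real work is being concealed in the choice of $\pi$.)

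The equivalence with (iii), which you rightly identify as the main obstacle, is not actually argued: the decisive steps appear only in the optative (``should force'', ``should follow''). Two places where the sketch as written would fail: the columns of an idempotent do not in general form an extremal generating set — by Corollary~\ref{cor_eigen} only the columns with zero diagonal entry generate, and their number up to scaling is the number of critical classes, which can be strictly less than $n$ — so identifying the generator dimension requires the eigenspace analysis of Section 5 rather than a formal consequence of $E^2=E$; and purity of the topological dimension does not follow formally from $C(E)$ being a retract of $\ft^n$ — it is the hard geometric content of \cite{K_puredim}, established there via min-plus convexity of projective polytopes and a cell-by-cell analysis of the column space. As it stands, the proposal reproduces the formal half of the theorem (modulo the freeness error) and defers exactly the parts that make it non-trivial.
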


Since all three notions of dimension coincide for projective polytopes, we define the \emph{dimension} of a projective tropical polytope to be this common value. We shall refer to projective polytopes of dimension $k$ as \emph{projective $k$-polytopes}. The following result is a consequence of \cite[Theorem 4.2]{K_puredim}.

\begin{proposition}\cite[Theorem 4.2]{K_puredim}.
\label{prop_IJK}
Let $X\subseteq \ft^n$ be a projective $k$-polytope. Then $X$ is isomorphic to the column space of a $k \times k$ idempotent matrix over $\ft$.
\end{proposition}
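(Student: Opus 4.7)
The plan is to reduce the problem to Theorem~\ref{thm_IJKmain} applied in an ambient space of the correct dimension. Starting from the projective $k$-polytope $X \subseteq \ft^n$, I first invoke Theorem~\ref{thm_IJKmain} to obtain an idempotent $E \in M_n(\ft)$ with $C(E) = X$. Since the generator dimension of $X$ equals $k$, only $k$ of the columns of $E$ are extremal (up to scaling); the remaining $n-k$ columns are redundant tropical combinations of these.

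The crucial step will be to exhibit a coordinate projection $\pi: \ft^n \to \ft^k$ onto some $k$-element set $I$ of coordinate indices whose restriction to $X$ is injective. The natural candidates for $I$ come from the dual dimension of $X$ also being $k$: by the dual-dimension characterisation of Theorem~\ref{thm_IJKmain}, $X$ has $k$ distinguished ``meet-generators'', and an argument dual to the one identifying the $k$ extremal columns should show that the projection onto the $k$ coordinates singled out by these meet-generators separates points of $X$. This is where \cite[Theorem~4.2]{K_puredim} does the real work, and I expect it to be the main obstacle --- once it is in hand, the rest of the argument is formal.

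Given such a $\pi$, set $Y = \pi(X) \subseteq \ft^k$. Then $\pi|_X : X \to Y$ is a bijective $\ft$-module homomorphism, and since the inverse of any bijective module homomorphism is itself a module homomorphism (both $\oplus$ and scaling are preserved by a routine direct computation using linearity of $\pi$), the map $\pi|_X$ is an isomorphism of $\ft$-modules. Projectivity, generator dimension and dual dimension are all isomorphism invariants, so $Y$ is a projective polytope of dimension $k$ sitting inside $\ft^k$.

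Finally, a second application of Theorem~\ref{thm_IJKmain}, now in the ambient space $\ft^k$, produces an idempotent $F \in M_k(\ft)$ with $C(F) = Y$. Composing the isomorphisms yields $X \cong Y = C(F)$, as required.
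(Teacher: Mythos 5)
First, a remark on the comparison you were asked to survive: the paper does not actually prove this proposition --- it is imported wholesale from \cite[Theorem 4.2]{K_puredim} --- so there is no internal proof to match. Judged on its own terms, your architecture is sound: realise $X$ as $C(E)$ for an idempotent $E \in M_n(\ft)$, find a $k$-element coordinate set $I$ whose projection $\pi_I$ is injective on $X$, and conclude in $\ft^k$. Indeed the paper's own Theorem~\ref{thm_fullrank} carries out essentially this projection argument for idempotents with zero diagonal. The problem is that you leave the decisive step --- the existence of the injective coordinate projection --- entirely unproved, explicitly deferring it to ``where \cite[Theorem 4.2]{K_puredim} does the real work'', i.e.\ to the very result being established; as written the proposal reduces the proposition to itself. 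Moreover, the mechanism you propose for selecting $I$ (``the $k$ coordinates singled out by the meet-generators'') is not well defined: a meet-generator is an element of $X$, not a coordinate index, and no rule is given for extracting an index from it.

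The correct selection, and the missing argument, go through the critical graph of $E$. By Corollary~\ref{cor_eigen} the indices $i$ with $E_{i,i}=0$ fall into exactly $k$ critical classes; pick representatives $c_1,\dots,c_k$, let $F$ be the principal $k\times k$ submatrix of $E$ on these indices and $E'$ the $n\times k$ submatrix formed by the columns $E_{c_1},\dots,E_{c_k}$. The computation in the proof of Theorem~\ref{thm_fullrank}(i) (which uses only $E_{c_i,c_i}=0$, not that every diagonal entry vanishes) shows $F$ is idempotent, and the analogous two-sided estimate $E_{j,c_i}\otimes E_{c_i,c_i} \leq (E'\otimes F)_{j,c_i} \leq (E\otimes E)_{j,c_i}$ gives $E'\otimes F = E'$. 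Since the columns $E_{c_1},\dots,E_{c_k}$ generate $C(E)$, every $u\in X$ has the form $E'\otimes\lambda$ with $\pi_I(u)=F\otimes\lambda$; hence $\pi_I(u)=\pi_I(v)$ with $u=E'\otimes\lambda$ and $v=E'\otimes\mu$ forces $u=E'\otimes F\otimes\lambda=E'\otimes F\otimes\mu=v$. This is exactly the injectivity you need, and the image is $C(F)$. With this supplied your proof closes up (you do not even need the second appeal to Theorem~\ref{thm_IJKmain}, since $F$ is already the required $k\times k$ idempotent); without it, there is a genuine gap at the heart of the argument.
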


We note that projective $n$-polytopes in $\ft^n$ turn out to have a particularly nice structure:
\begin{theorem}\cite[Proposition 5.5]{K_puredim}.
\label{thm_minplusproj}
Let $X\subseteq \ft^n$ be a projective $n$-polytope. Then $X$ is min-plus (as well as max-plus) convex.
\label{npolytrope}
\end{theorem}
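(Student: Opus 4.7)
The plan is to represent $X$ as the column space of a suitably chosen idempotent $E \in M_n(\ft)$, show that left multiplication by $E$ is a closure operator on the whole of $\ft^n$, and then deduce min-plus convexity of $X$ from a short formal calculation.

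First I would invoke Theorem~\ref{thm_IJKmain} to write $X = C(E)$ for some idempotent $E \in M_n(\ft)$. Because the generator dimension of $X$ equals $n$, the columns of $E$ are projective representatives of the $n$ extremal generators of $X$, in particular pairwise non-proportional. I then argue that, in this full rank case, the columns may be rescaled (and possibly permuted) so that $E$ remains idempotent and additionally satisfies $E_{ii} = 0$ for every $i$. Equivalently, each extremal generator of $X$ can be matched with a unique coordinate index $i$ and rescaled so that its $i$-th entry equals $0$, with the resulting matrix still idempotent. The full rank hypothesis enters precisely here: the fact that there are exactly $n$ distinct extremal generators sitting in $\ft^n$ is what permits the matching between generators and coordinate directions, mirroring the structure of a Kleene-star representative.

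With this choice of $E$, the map $\pi_E\colon \ft^n \to \ft^n$ given by $\pi_E(v) = E \otimes v$ is tropically linear (and hence monotone), idempotent (since $E^2 = E$), has image equal to $C(E) = X$, and satisfies $\pi_E(v) \geq v$ coordinatewise, because
\[
(E \otimes v)_i \;=\; \max_k (E_{ik} + v_k) \;\geq\; E_{ii} + v_i \;=\; v_i.
\]
Thus $\pi_E$ is a closure operator on $\ft^n$ whose set of fixed points is exactly $X$.

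Min-plus convexity of $X$ then follows in a few lines. Closure under tropical scaling is immediate from max-plus convexity, so it suffices to show closure under componentwise minimum. Given $x, y \in X$, let $z \in \ft^n$ be their componentwise minimum. Monotonicity of $\pi_E$ together with $\pi_E(x) = x$ and $\pi_E(y) = y$ gives $\pi_E(z) \leq \pi_E(x) = x$ and $\pi_E(z) \leq \pi_E(y) = y$, hence $\pi_E(z) \leq z$; the closure property gives $\pi_E(z) \geq z$; so $z = \pi_E(z) \in X$.

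The main obstacle is the normalisation step: justifying that the idempotent representative of a full rank projective polytope can always be chosen with zero diagonal. Without this, $\pi_E$ is only a monotone retraction onto $X$ rather than a closure operator on $\ft^n$, and the final sandwich argument no longer pins down $\min(x,y)$. I expect this step to rest on a careful analysis of how the $n$ extremal generators of $X$ are forced to distribute themselves among the $n$ coordinates of $\ft^n$ when $X$ has maximal generator dimension.
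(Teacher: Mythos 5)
Your closure-operator argument is exactly the one this paper itself uses: the first half of the proof of Theorem~\ref{thm_minplus} carries out precisely your sandwich computation ($E\otimes z\le z$ from monotonicity of $x\mapsto E\otimes x$ and the fixed-point property, $E\otimes z\ge z$ from the zero diagonal), and the paper explicitly attributes that computation to the proof of the very proposition you are proving. So the overall strategy is sound and matches the source.

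The one step you flag as the main obstacle --- producing an idempotent $E$ with $C(E)=X$ and $E_{i,i}=0$ for all $i$ --- is where your write-up is both incomplete and slightly misdirected. Rescaling the columns of an idempotent amounts to replacing $E$ by $E\otimes D$ for a diagonal matrix $D$, and $(ED)^2=EDED$ need not equal $ED$, so ``rescale and permute the columns so that $E$ remains idempotent'' is not a safe operation and would itself require proof. Fortunately no normalisation is needed: the zero diagonal is automatic for a full-rank idempotent. Take any idempotent $E$ with $C(E)=X$, as supplied by Theorem~\ref{thm_IJKmain}. Its rank equals the generator dimension of $C(E)$, which is $n$ because $X$ is a projective $n$-polytope. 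By Corollary~\ref{cor_eigen}(vii) the critical graph of $E$ then has $n$ strongly connected components; since that graph has at most $n$ nodes, every node of $\Gamma_E$ is critical, and Corollary~\ref{cor_eigen}(ii) gives $E_{i,i}=0$ for every $i$. With that substitution in place of your rescaling step, your proof is complete.
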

It is easily verified that any tropical polytope that is min-plus (as well as max-plus) convex must be convex in the usual (Euclidean) sense.

Numerous definitions of rank have been introduced and studied for tropical 
matrices (see for example \cite{Akian06,Develin05} for more details), 
mostly corresponding to different notions of ``dimension'' of the row or 
column space. In light of Theorem~\ref{thm_IJKmain}, we shall focus on the 
following three definitions of rank. The \emph{tropical rank} of a matrix 
is the tropical dimension of its row space (or equivalently, by 
\cite[Theorem 23]{Develin04} for example, its column space). It also has
a characterisation in terms of the computation of the matrix permanent
\cite{Develin05}. The \emph{row rank} is 
the generator dimension of the row space, which by \cite[Proposition 
3.1]{K_puredim} is also the dual dimension of the column space. Dually, 
the \emph{column rank} is the generator dimension of the column space and 
also the dual dimension of the row space. Whilst these three notions of 
rank can in general differ, it follows from 
Theorem~\ref{thm_IJKmain} that they must coincide for any 
\emph{idempotent} matrix. Thus we shall refer without ambiguity to the 
\emph{rank} of an idempotent matrix.

We define a scalar product operation $\ft^n \times \ft^n \rightarrow \ft^n$ on affine tropical $n$-space by setting
$$\langle x | y \rangle = {\rm max}\{\lambda \in \ft : \lambda \otimes x \leq y\}.$$
This is a \textit{residual} operation in the sense of residuation theory \cite{Blyth72}, and has been frequently employed in max-plus algebra. We recall the following result from \cite{K_finitemetric}.

\begin{lemma}\cite[Lemma 5.3]{K_finitemetric}
\label{lem_bracket}
Let $E$ be an idempotent element of $M_n(\ft)$. Let $E_1, \ldots, E_n$ denote the columns of $E$. If $E_{i,i} = 0$ then $E_{j,i} = \langle E_j | E_i \rangle$ for all $j$.
\end{lemma}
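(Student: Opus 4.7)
The plan is to prove the equality by establishing the two pointwise inequalities $E_{j,i} \leq \mu$ and $\mu \leq E_{j,i}$, where $\mu := \langle E_j | E_i \rangle = \max\{\lambda \in \ft : \lambda \otimes E_j \leq E_i\}$.

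For the first inequality $E_{j,i} \leq \mu$, I would exploit the idempotency of $E$ directly. Since $E \otimes E = E$, the $i$-th column admits the tropical decomposition
\[
E_i \;=\; E \otimes E_i \;=\; \bigoplus_{\ell=1}^{n} E_{\ell,i} \otimes E_\ell,
\]
where $E_\ell$ denotes the $\ell$-th column of $E$. Each summand on the right-hand side is bounded componentwise by the join $E_i$, so specialising to $\ell = j$ yields $E_{j,i} \otimes E_j \leq E_i$. By the maximality condition defining $\mu$, this forces $E_{j,i} \leq \mu$.

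For the reverse inequality $\mu \leq E_{j,i}$, the definition of $\mu$ supplies $\mu \otimes E_j \leq E_i$ componentwise, that is, $\mu + E_{k,j} \leq E_{k,i}$ for every index $k$. The hypothesis $E_{i,i} = 0$ enters the argument here: reading off the above bound at the coordinate for which the diagonal of $E$ vanishes, the diagonal contribution disappears and the generic inequality collapses into the sharp bound $\mu \leq E_{j,i}$.

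The main obstacle, as I anticipate, lies in the reverse direction: namely, pinning down precisely which coordinate $k$ must be chosen so that the hypothesis $E_{i,i} = 0$ eliminates the correct term and delivers $\mu \leq E_{j,i}$ rather than a weaker bound such as $\mu \leq -E_{i,j}$ (which one gets by a naive choice of $k$). The first inequality is essentially immediate from idempotency together with the definition of $\mu$, so no real difficulty is expected there; combining both inequalities then gives the claimed equality.
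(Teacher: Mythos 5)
This lemma is not proved in the paper at all---it is imported from \cite{K_finitemetric}---so there is no internal proof to compare against; I can only assess your argument on its own merits. Your first inequality is correct and complete: idempotency gives $E_{k,i}\geq E_{k,j}\otimes E_{j,i}$ for every $k$, i.e.\ $E_{j,i}\otimes E_j\leq E_i$, whence $E_{j,i}\leq\mu$. But the reverse inequality, which you explicitly leave open (``pinning down precisely which coordinate $k$ must be chosen''), is a genuine gap, and it cannot be closed in the way you anticipate. From $\mu+E_{k,j}\leq E_{k,i}$ for all $k$, the coordinate $k=i$---the only place where the stated hypothesis $E_{i,i}=0$ can act---yields exactly the weak bound $\mu\leq -E_{i,j}$ you were worried about, and this really is weaker than $\mu\leq E_{j,i}$, since $E_{i,j}+E_{j,i}\leq(E^2)_{i,i}=E_{i,i}=0$. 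The coordinate that works is $k=j$: it gives $\mu\leq E_{j,i}-E_{j,j}$, which closes the argument precisely when $E_{j,j}=0$ (recall $E_{j,j}\leq 0$ always holds for an idempotent over $\ft$). The hypothesis $E_{i,i}=0$ plays no role in either direction.

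In fact, no proof from the hypothesis $E_{i,i}=0$ alone can exist, because the statement as printed is false. Take $n=3$ and let $E$ be the matrix whose first row is constantly $0$, second row constantly $-1$ and third row constantly $-2$; one checks directly that $E$ is idempotent and $E_{1,1}=0$, yet for $i=1$, $j=2$ the columns satisfy $E_1=E_2$, so $\langle E_2\,|\,E_1\rangle=0\neq -1=E_{2,1}$. The intended hypothesis is evidently $E_{j,j}=0$ (note that the one place the lemma is invoked, in the proof of Theorem~\ref{thm_minplus}, both relevant diagonal entries vanish), and under that hypothesis your two-inequality strategy succeeds verbatim with the choice $k=j$, giving a two-line proof. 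So: the skeleton is right, but the single step you flagged as the difficulty is exactly where the argument---and, as literally stated, the lemma itself---breaks down.
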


\section{Eigenvalues, eigenvectors and idempotents}
Let $A \in M_n(\ft)$ and let $\Gamma_A$ denote the corresponding weighted directed graph. We define the \emph{maximum cycle mean of $A$} to be the maximum average\footnote{By \textit{average}, we mean the
classical arithmetic mean, which is the tropical geometric mean.} weight of a path from a node to itself in $\Gamma_A$. Let $\lambda$ be the maximum cycle mean of $A$. Then the \emph{critical graph of $A$} consists of all nodes and edges involved in any path from a node to itself with average weight $\lambda$. The nodes occurring in the critical graph are called \emph{critical nodes}. It can be shown (see \cite{Butkovic10}, for example) that if $\lambda \leq 0$ then the following series converges to a finite limit, denoted $A^+$, in $M_n(\ft)$:
$$A \oplus A^2 \oplus \cdots \oplus A^n \oplus \cdots.$$

The following result is well known to experts in tropical mathematics (see \cite{Butkovic10} for details).

\begin{theorem}
\label{eigenval}
Let $A \in M_n(\ft)$ with corresponding weighted directed graph $\Gamma_A$. Let $\lambda$ denote the maximum cycle mean of $A$ and set $A_{\lambda} = -\lambda \otimes A$.  Then
\begin{itemize}
\item[(i)] The maximum cycle mean $\lambda$ is the unique eigenvalue of $A$.
\item[(ii)] The columns of $(A_{\lambda})^+$ labelled by the critical nodes form a generating set for the eigenspace of $A$.
\item[(iii)] Let $i$ and $j$ be critical nodes. The columns of $(A_{\lambda})^+$ labelled by $i$ and $j$ are tropical multiples of each other if and only if they occur in the same strongly connected component of the critical graph.
\item[(iv)] The eigenspace has generator dimension equal to the number of strongly connected components in the critical graph.
\end{itemize}
\end{theorem}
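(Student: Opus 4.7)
The plan is to first reduce to the case $\lambda = 0$ by replacing $A$ with $A_\lambda = -\lambda \otimes A$ (which shifts the eigenvalue by $-\lambda$ but preserves $\Gamma_A$, the critical graph, and all questions of linear equivalence between columns). Under this reduction the maximum cycle mean of $A$ is $0$, no cycle in $\Gamma_A$ has positive total weight, and the series defining $A^+$ converges because any path of length exceeding $n$ can be shortened by excising a cycle without decreasing its weight.

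For part (i), existence follows by checking that each column $A^+_j$ with $j$ a critical node is fixed by $A$: the inequality $A \otimes A^+_j \leq A^+_j$ is immediate from the path interpretation of $A^+$, and the reverse inequality uses the zero-mean critical cycle through $j$ to extend paths terminating at $j$ backwards through that cycle without loss of weight. For uniqueness of the eigenvalue, given $A \otimes v = \mu \otimes v$, following an argmax edge from each coordinate generates a cycle in $\Gamma_A$ of mean exactly $\mu$ (the $v$-terms telescope around the cycle), yielding $\mu \leq 0$. Conversely, iterating $A^{k} \otimes v = k\mu \otimes v$ and exploiting finiteness of the entries of $A$ to construct a length-$k$ closed walk at any node that loops around the critical cycle gives $(A^k)_{j,j} \geq -O(1)$, forcing $\mu \geq 0$ in the limit $k \to \infty$.

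For part (ii), any eigenvector $v$ with eigenvalue $0$ satisfies $A^k \otimes v = v$ for all $k$, hence $A^+ \otimes v = v$, expressing $v$ as a tropical combination of the columns of $A^+$. The \emph{main obstacle} is showing that the non-critical columns are redundant. I would handle this by observing that if $j$ is non-critical then every cycle through $j$ has strictly negative mean, so $(A^+)_{j,j} < 0$; by chasing the argmax realising $v_j = \bigoplus_i (A^+)_{j,i} \otimes v_i$ one obtains an index $i \neq j$ for which $(A^+)_j \otimes v_j$ is dominated by $(A^+)_i \otimes v_i$, and finite iteration of this step necessarily terminates at a critical index.

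For parts (iii) and (iv), two nodes $i,j$ in the same strongly connected component of the critical graph admit critical paths in both directions of total weight $0$, yielding mutual inequalities that force $A^+_i$ and $A^+_j$ to be tropical scalar multiples of each other. Conversely, proportionality of $A^+_i$ and $A^+_j$ implies the existence of weight-$0$ paths joining them in both directions, which must decompose into critical edges and hence place $i$ and $j$ in the same component. Part (iv) then follows from (ii) and (iii): one representative from each component of the critical graph produces a generating set of pairwise non-proportional eigenvectors, and minimality follows from the fact, recalled earlier in the paper, that every tropical polytope is generated by its extremal points considered up to scaling.
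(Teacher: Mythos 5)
The paper offers no proof of this theorem: it is quoted as ``well known to experts'' with a pointer to Butkovi\v{c}'s book, so there is no internal argument to compare against. Your sketch follows the standard max-plus spectral route (normalise to $\lambda=0$, read $(A^+)_{i,j}$ as maximal path weights, telescope the argmax cycle of an eigenvector for $\mu\leq 0$, use powers of $A$ against a zero-weight critical cycle for $\mu\geq 0$, and saturate inequalities via critical cycles in (iii)), and parts (i)--(iii) are essentially sound. In (ii), the assertion that the argmax-chasing ``necessarily terminates at a critical index'' does need one more line: if the chain of dominating indices revisited a non-critical $j$, the telescoped equalities would concatenate to a closed walk of weight $0$ through $j$, forcing $(A^+)_{j,j}\geq 0$ and contradicting non-criticality.

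The genuine gap is in (iv). Parts (ii) and (iii) give only the upper bound: one column per strongly connected component of the critical graph generates the eigenspace, so the generator dimension is at most the number of components. For the lower bound you invoke the fact that every generating set contains all extremal points up to scaling; but that only shows the extremal points lie among your chosen columns, not that each chosen column is extremal, and pairwise non-proportionality (which is all (iii) supplies) does not imply minimality of a generating set. For instance, in $\ft^2$ the vectors $(0,0)$, $(0,1)$, $(1,0)$ are pairwise non-proportional, yet $(0,0)=(-1)\otimes(0,1)\oplus(-1)\otimes(1,0)$, so the first is redundant. You must show directly that a critical column is not a tropical combination of critical columns from other components. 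A short argument in the spirit of your step (ii) does it: if $(A^+)_i=\bigoplus_j c_j\otimes (A^+)_j$ with $j$ ranging over critical nodes in other components, then evaluating coordinate $i$ and using $(A^+)_{i,i}=0$ yields some $j$ with $c_j=-(A^+)_{i,j}$; evaluating coordinate $j$ and using $(A^+)_{j,j}=0$ yields $(A^+)_{j,i}\geq c_j$, hence $(A^+)_{i,j}+(A^+)_{j,i}\geq 0$. Since no closed walk has positive weight, this sum is $0$, producing a critical cycle through $i$ and $j$ and placing them in the same component, a contradiction. With that inserted, (iv) is complete.
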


From now on let $E$ be an idempotent in $M_n(\ft)$. Theorem~\ref{eigenval} applied to $E$ has a number of interesting consequences.
\begin{corollary}
\label{cor_eigen}
Let $E \in M_n(\ft)$ be an idempotent. Then
\begin{itemize}
\item[(i)] $E$ has unique eigenvalue $0$ (which is the maximum cycle mean of $E$) and corresponding eigenspace $C(E)$.
\item[(ii)] $E_{i,i}=0$ if and only if $i$ is a node in the critical graph of $E$.
\item[(iii)] The columns of $E$ with diagonal entry $0$ form a generating set for the column space of $E$.
\item[(iv)] Every extremal point of $C(E)$ occurs up to scaling as a column of $E$ with $0$ in the diagonal position.
\item[(v)] The rows of $E$ with diagonal entry $0$ form a generating set for the row space of $E$.
\item[(vi)] Every extremal point of $R(E)$ occurs up to scaling as a row of $E$ with $0$ in the diagonal position.
\item[(vii)] The rank of $E$ is equal to the number of strongly connected components of the critical graph of $E$.
\item[(viii)] Each strongly connected component of the critical graph of $E$ is a complete subgraph of $\Gamma_E$.
\item[(ix)] If $i$ and $j$ are in the same strongly connected component of the critical graph then the $i$th column of $E$ is a multiple of the $j$th column of $E$.
\item[(x)] If $i$ and $j$ are in the same strongly connected component of the critical graph then the $i$th row of $E$ is a multiple of the $j$th row of $E$.
\end{itemize}
\end{corollary}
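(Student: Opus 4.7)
The plan is to derive all ten items from Theorem~\ref{eigenval} applied to $E$, using the identity $E^k = E$ for all $k \ge 1$, which gives $E^+ = E$ once we have established that the maximum cycle mean of $E$ is $\lambda = 0$.

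For (i), each column $E_i$ is an eigenvector of $E$ with eigenvalue $0$, since $E \otimes E_i$ is the $i$th column of $E^2 = E$. By Theorem~\ref{eigenval}(i), this forces $\lambda = 0$ and identifies $0$ as the unique eigenvalue. Conversely, any $x$ in the eigenspace satisfies $x = E \otimes x \in C(E)$, so the eigenspace coincides with $C(E)$. For (ii), the $(i,i)$-entry of $E^2 = E$ gives $E_{i,i} \ge 2E_{i,i}$, so $E_{i,i} \le 0$; if $E_{i,i} = 0$ then the self-loop at $i$ is itself a zero-average cycle, making $i$ critical, and conversely, if $i$ is critical then $i$ lies on a cycle of weight $0$ and length $k$, whence $E_{i,i} = (E^k)_{i,i} \ge 0$.

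With $\lambda = 0$ the series $(E_\lambda)^+$ collapses to $E$, so items (iii) and (ix) follow directly from parts (ii) and (iii) of Theorem~\ref{eigenval}. Items (v) and (x) then follow by applying the same analysis to $E^T$, which is idempotent, has the same diagonal, and whose critical graph has the same strongly connected components as that of $E$ (edge reversal preserves s.c.c.s). Items (iv) and (vi) are immediate consequences of (iii) and (v), since every extremal point of a tropical polytope occurs, up to scaling, in every generating set. Item (vii) combines Theorem~\ref{eigenval}(iv) with the identification of the eigenspace as $C(E)$ in (i), together with Theorem~\ref{thm_IJKmain} (which ensures the various notions of rank coincide for an idempotent).

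The main obstacle is (viii). The key preliminary lemma is that every critical edge $(u,v)$ of $\Gamma_E$ satisfies $E_{u,v} + E_{v,u} = 0$: if $(u,v)$ lies on a zero-weight cycle whose return portion from $v$ to $u$ has weight $p$, then $E_{u,v} = -p$ and $E_{v,u} \ge p$ (since $E^\ell = E$ bounds $E_{v,u}$ below by any path weight), while $E_{u,v} + E_{v,u} \le E_{u,u} \le 0$ comes from $E^2 = E$, forcing equality. For $i, j$ in the same strongly connected component of the critical graph, take a critical forward path $i \to j$ and a critical backward path $j \to i$ of weights $s$ and $t$; their concatenation is a cycle whose average cannot exceed $\lambda = 0$, so $s + t \le 0$. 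Reversing the backward path via the lemma yields a path $i \to j$ of weight $-t$, so $E_{i,j} \ge -t$; combined with $E_{j,i} \ge t$ and the upper bound $E_{i,j} + E_{j,i} \le E_{i,i} = 0$, equality holds. Thus $(i,j), (j,i)$ is a zero-weight two-cycle in $\Gamma_E$, both edges are critical, and the strongly connected component is complete as a subgraph of $\Gamma_E$, with all its edges lying in the critical graph.
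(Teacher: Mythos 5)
Your proposal is correct, and for parts (i)--(vii), (ix) and (x) it follows essentially the same route as the paper: everything is read off from Theorem~\ref{eigenval} after observing that the maximum cycle mean is $0$ and that $(E_\lambda)^+ = E^+ = E$, with the row statements obtained by passing to the idempotent $E^T$ and the extremal-point statements from the fact that extremal points lie in every generating set. The one place you genuinely diverge is part (viii). The paper gets it in two lines by quoting Theorem~\ref{eigenval}(iii): if $i$ and $j$ lie in the same strongly connected component then $E_i = \alpha \otimes E_j$, and evaluating this relation at the coordinates $i$ and $j$ (where the diagonal entries vanish) immediately gives $E_{i,j} + E_{j,i} = 0$, hence a zero-weight $2$-cycle. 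You instead prove a standalone graph-theoretic lemma -- that every critical edge $(u,v)$ satisfies $E_{u,v} + E_{v,u} = 0$ -- and then transport weights along critical paths between $i$ and $j$. Your argument is correct (the inequality $s+t\le 0$ you derive is in fact not needed, since $E_{i,j}\ge -t$ and $E_{j,i}\ge t$ together with $E_{i,j}+E_{j,i}\le E_{i,i}=0$ already force equality), and it has the mild advantage of not relying on the eigenvector machinery of Theorem~\ref{eigenval}(iii), using only $E^k = E$; the paper's version is shorter and reuses the same fact that also proves (ix).
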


\begin{proof}
(i) Since $E$ is idempotent it is immediate that $E \otimes c = c$ for all columns $c$ of $E$, giving that $0$ is an eigenvalue of $E$, with corresponding eigenspace $C(E)$. By Theorem~\ref{eigenval}(i), $E$ has unique eigenvalue equal to the maximum cycle mean of $E$.

(ii) We note that the critical graph of $E$ consists of all nodes and edges involved in any zero-weighted path from a node to itself. If $E_{i,i}=0$ then, by definition, $i$ is a critical node of $E$. On the other hand, if $i$ is a critical node of $E$ then there is a path from $i$ to $i$ with weight zero. If this path has length $k$, then $E^{\otimes k}_{i,i} = 0$. Since $E$ is an idempotent, this gives $E_{i,i} =0$.

(iii) By Theorem~\ref{eigenval}(ii), the columns of $E$ labelled by the critical nodes form a generating set for the eigenspace of $E$. Since the eigenspace of $E$ is equal to the column space of $E$, the result now follows from, part (ii).

(iv) By the remarks in Section 2, some scaling of every extremal point must
lie in every generating set for $C(E)$.

(vii) Recall that the rank of an idempotent matrix may be defined to be the generator dimension of its column space. The result then follows from Theorem~\ref{eigenval}(iv).

(viii) Let $i$ and $j$ be in the same strongly connected component of the critical graph of $E$ and let $E_i$ and $E_j$ denote the $i$th and $j$th columns of $E$. By Theorem~\ref{eigenval}(iii) we see that $E_i = \alpha \otimes E_j$ (and hence $E_j = -\alpha \otimes E_i$). Moreover, since $i$ and $j$ are critical nodes we have $E_{i,i} = E_{j,j}=0$ by part (ii), giving
$$E_{i,j} + E_{j,i} = (\alpha \otimes E_{j,j}) + (-\alpha \otimes E_{i,i}) = \alpha + -\alpha = 0.$$
Thus we have a zero-weighted path from $i$ to itself via $j$. By definition, this cycle is in the critical graph of $E$. It then follows that each strongly connected component of the critical graph will be a complete graph.

(ix) This follows immediately from Theorem~\ref{eigenval}(iii).

Similar arguments hold for parts (v), (vi) and (x) by considering the row space of $E$ to be the column space of the idempotent $E^T$.
\end{proof}

Consider the set $C$ of critical nodes of $E$. There is an obvious equivalence relation on $C$, given by $i \sim j$ if and only if $i$ and $j$ are in the same strongly connected component of the critical graph. We call the equivalence classes of this relation the \emph{critical classes}. Notice that Corollary~\ref{cor_eigen} tells us that any set of representatives of the critical classes of $E$ yields a minimal generating set for the row [respectively, column] space of $E$. In fact, by Lemma~\ref{lem_relations}, any such set of representatives will give a generating set (not necessarily minimal) for the row [respectively, column] space of any matrix $\GreenR$-below [respectively, $\GreenL$-below] $E$.

\begin{corollary}
\label{cor_basis}
Let $A, E \in M_n(\ft)$, with $E$ idempotent and let $\{c_1, \ldots, c_k\}$ be a set of representatives of the critical classes of $E$.
\begin{itemize}
\item[(i)] If $A \leq_\GreenR E$ then the rows labelled by $c_1, \ldots, c_k$ form a generating set for the row space of $A$. If $A \GreenR E$ then this generating set is minimal.
\item[(i)] If $A \leq_\GreenL E$ then the columns labelled by $c_1, \ldots, c_k$ form a generating set for the column space of $A$. If $A \GreenL E$ then this generating set is minimal.
\end{itemize}
\end{corollary}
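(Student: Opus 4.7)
The plan is to deduce both parts from Lemma~\ref{lem_relations} once one knows that the rows (resp.\ columns) of $E$ labelled by $c_1, \ldots, c_k$ already form a minimal generating set for $R(E)$ (resp.\ $C(E)$).

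First I would verify this generating property. For the column space, Corollary~\ref{cor_eigen}(iii) together with (ii) says that the columns of $E$ indexed by critical nodes generate $C(E)$, while (ix) guarantees that columns whose indices lie in the same critical class are tropical scalar multiples of each other; hence any transversal $c_1, \ldots, c_k$ of the critical classes yields a $k$-element generating set. By Corollary~\ref{cor_eigen}(vii) the number $k$ of critical classes equals the generator dimension of $C(E)$, forcing this generating set to be minimal. The corresponding statement for $R(E)$ is obtained by applying the same reasoning to $E^T$ (or by reading off parts (v), (x), (vii) directly), noting that the critical graph of $E^T$ has the same strongly connected components as that of $E$.

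For part (i), assume $A \leq_\GreenR E$. For each $j$, express the $j$th row of $E$ as a tropical linear combination $E_j = \bigoplus_{i=1}^{k} \lambda_{j,i} \otimes E_{c_i}$ using the generating set above. By Lemma~\ref{lem_relations}(ii) the identical relation holds among the rows of $A$, i.e.\ $A_j = \bigoplus_{i=1}^{k} \lambda_{j,i} \otimes A_{c_i}$, so the rows $A_{c_1}, \ldots, A_{c_k}$ generate $R(A)$. If in addition $A \GreenR E$, then $C(A) = C(E)$, and Theorem~\ref{thm_HK}(iv) furnishes a linear isomorphism $R(A) \to R(E)$ sending $A_j \mapsto E_j$ for each $j$; in particular $R(A)$ has generator dimension $k$, so the $k$ generators constructed are minimal. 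Part (ii) follows by the symmetric argument, replacing Lemma~\ref{lem_relations}(ii) by (i), the row side of Corollary~\ref{cor_eigen} by its column side, and Theorem~\ref{thm_HK}(iv) by (ii).

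There is no serious obstacle here; the one place to be careful is that Lemma~\ref{lem_relations} transfers a relation among the rows (resp.\ columns) of $E$ to the rows (resp.\ columns) of $A$ \emph{at the same indices}, which is exactly what allows the labels $c_1, \ldots, c_k$ to be reused verbatim in the conclusion.
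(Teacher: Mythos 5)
Your proof is correct. The first half---establishing via Corollary~\ref{cor_eigen} that the rows of $E$ labelled by $c_1,\ldots,c_k$ form a minimal generating set for $R(E)$, and then transferring the generating property to $A$ through Lemma~\ref{lem_relations}(ii), which preserves the indices---is exactly the paper's argument. Where you diverge is in the minimality step when $A \GreenR E$: you invoke Theorem~\ref{thm_HK}(iv) to extract from $C(A)=C(E)$ a linear isomorphism between $R(A)$ and $R(E)$, and conclude that $R(A)$ has generator dimension $k$ because $R(E)$ does. The paper instead routes the count through projectivity: $C(A)=C(E)$ is the column space of an idempotent, so by Theorem~\ref{thm_IJKmain} its generator dimension equals its dual dimension, and the dual dimension of $C(A)$ equals the generator dimension of $R(A)$ by \cite[Proposition 3.1]{K_puredim}; hence the generator dimension of $R(A)$ equals that of $C(E)$, namely $k$. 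Both arguments are equally short and valid; yours has the advantage of relying only on Theorem~\ref{thm_HK}, which is quoted in full in this paper, rather than on an externally cited proposition, while the paper's version keeps the identification of $k$ with the dual dimension in view, consistent with the dimension-theoretic machinery of Section~4.
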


\begin{proof}
We prove part (i), the proof of part (ii) being dual. It follows immediately from Lemma~\ref{lem_relations} and Corollary~\ref{cor_eigen} that the rows labelled by $c_1, \ldots, c_k$ form a generating set for the row space of $A$. Now suppose that $A \GreenR E$. Then $C(A) = C(E)$ and Theorem~\ref{thm_IJKmain} gives that $C(A)$ has generator dimension equal to its dual dimension. By \cite[Proposition 3.1]{K_puredim}, the dual dimension of $C(A)$ is equal to the generator dimension of $R(A)$. Thus we see that the minimum cardinality of a generating set for $R(A)$ is equal to the minimal cardinality of a generating set for $C(A)=C(E)$, which by Corollary~\ref{cor_eigen} is equal to $k$.
\end{proof}

\section{A reduction to idempotents of full rank}
Let $E \in M_n(\ft)$ and suppose that $E$ has rank $k \leq n$. In this section we shall prove that the $\GreenH$-class of $E$, denoted by $H_E$, is
isomorphic, as a group, to the $\GreenH$-class of a $k \times k$ idempotent $F$ of full rank $k$. We begin by showing that each $\GreenD$-class contains an idempotent whose diagonal entries are all equal to $0$. Since the maximal subgroups in each $\GreenD$-class are all isomorphic, it follows that we may restrict attention to those idempotents with all diagonal entries equal to $0$. Given such an idempotent $E$, the main result of this section (Theorem~\ref{thm_fullrank}) constructs a $k \times k$ idempotent $F$ of full rank $k$ and a group isomorphism between the corresponding $\GreenH$-classes, $H_E$ and $H_F$. Throughout this section we shall make use of several results and proofs from \cite{K_puredim}.

\begin{theorem}
\label{thm_minplus}
Let $E\in M_n(\ft)$ be an idempotent. Then the column space of $E$ is min-plus convex if and only if there is an idempotent $F\in M_n(\ft)$ such that $F_{i,i}=0$ for all $i$ and $C(F)=C(E)$.
\end{theorem}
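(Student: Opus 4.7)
The plan is to handle the two directions separately. The easier direction $(\Leftarrow)$ is a direct computation using the fixed-point characterisation $C(F) = \{x \in \ft^n : F \otimes x = x\}$ for idempotent $F$; the harder direction $(\Rightarrow)$ requires constructing the idempotent $F$ explicitly from $C(E)$ using the min-plus convexity hypothesis.

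For $(\Leftarrow)$, suppose $F$ is idempotent with $F_{i,i}=0$ for all $i$ and $C(F) = C(E)$. Given $x, y \in C(F)$, to show $x \wedge y \in C(F)$ (and hence that $C(F)$ is min-plus convex, closure under scaling being automatic) it suffices to verify $F \otimes (x \wedge y) = x \wedge y$. The lower bound is immediate from the $j = i$ term: $F_{i,i} + \min(x_i, y_i) = \min(x_i, y_i)$. For the upper bound, the fixed-point conditions $F \otimes x = x$ and $F \otimes y = y$ give $F_{i,j} + x_j \leq x_i$ and $F_{i,j} + y_j \leq y_i$ for all $j$, hence $F_{i,j} + \min(x_j, y_j) \leq \min(x_i, y_i)$, and taking the max over $j$ preserves the inequality.

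For $(\Rightarrow)$, assume $C(E)$ is min-plus convex. For each $i$, the set $S_i := \{y \in C(E) : y_i = 0\}$ is non-empty (scale any column $E_\ell$ of $E$ by $-E_{i,\ell}$). Using a finite generating set $g^{(1)}, \ldots, g^{(m)}$ for $C(E)$ and setting $u_\ell := -g^{(\ell)}_i \otimes g^{(\ell)} \in S_i$, I would define
$$F_i := \bigwedge_{\ell=1}^{m} u_\ell,$$
which lies in $C(E)$ by min-plus convexity applied to a \emph{finite} meet. For any $y = \bigoplus_\ell \lambda_\ell \otimes g^{(\ell)} \in S_i$, the condition $y_i = 0$ forces some $\ell_0$ with $\lambda_{\ell_0} = -g^{(\ell_0)}_i$, giving $y \geq \lambda_{\ell_0} \otimes g^{(\ell_0)} = u_{\ell_0} \geq F_i$; thus $F_i$ is the minimum of $S_i$, and in particular $(F_i)_i = 0$. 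I let $F$ be the matrix with columns $F_1, \ldots, F_n$.

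It remains to show that $F$ is idempotent with $C(F) = C(E)$, and the crux is that $F$ acts as the identity on $C(E)$. For $x \in C(E)$ and any $j$, the scaling $-x_j \otimes x$ lies in $S_j$, so by minimality of $F_j$ we have $-x_j \otimes x \geq F_j$, equivalently $F_{i,j} + x_j \leq x_i$ for all $i$. Taking max over $j$, and noting that $j = i$ yields equality since $F_{i,i} = 0$, gives $(F \otimes x)_i = x_i$. From this, $F \otimes F_j = F_j$ for each $j$ (since $F_j \in C(E)$), hence $F^2 = F$; and both $C(F) \subseteq C(E)$ (by construction of the columns) and $C(E) \subseteq C(F)$ (since $x = F \otimes x$ exhibits $x$ as a max-plus combination of the columns of $F$) hold. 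The one subtle point is ensuring that the infimum defining $F_i$ exists and lies in $C(E)$; finite generation of the polytope is what reduces this to a finite min, to which the min-plus convexity hypothesis directly applies.
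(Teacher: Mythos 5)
Your proof is correct, and while the construction of $F$ coincides with the paper's (the $i$th column is the infimum of the elements of $C(E)$ that vanish, or are nonnegative, in coordinate $i$ --- these two sets have the same infimum), your verification of the hard direction is genuinely different and rather more economical. The paper proves idempotency by a direct entrywise estimate ($F_{i,j}\geq F_{i,k}\otimes F_{k,j}$ via comparing $F_k$ with $(-F_{k,j})\otimes F_j$), and proves $C(E)\subseteq C(F)$ by a separate extremal-point argument: it shows each $F_i$ is extremal in $C(E)$ and then, invoking Corollary~\ref{cor_eigen} and Lemma~\ref{lem_bracket}, that every extremal point of $C(E)$ arises as some $F_k$ (indeed $F_k=E_k$ whenever $E_{k,k}=0$). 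You instead prove the single statement that $F$ acts as the identity on $C(E)$ --- from $-x_j\otimes x\in S_j$ one gets $F_{i,j}+x_j\leq x_i$, with equality at $j=i$ --- and derive both $F^2=F$ (apply it to the columns $F_j\in C(E)$) and $C(E)\subseteq C(F)$ (the identity $x=F\otimes x$ exhibits $x$ as a combination of columns of $F$) in one stroke. This makes your argument self-contained, avoiding the eigenvector machinery and the residuation lemma entirely; what the paper's longer route buys is the extra geometric information that the columns of $F$ are exactly the extremal points of $C(E)$ normalised to have a zero in the diagonal position, which is used implicitly elsewhere. Your handling of the existence of the infimum (reducing to a finite meet of the normalised generators $u_\ell$, to which min-plus convexity applies, and checking $F_i$ is a genuine minimum of $S_i$) is also sound and mirrors the paper's use of closedness and finite generation.
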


\begin{proof}
The statement is trivial for $n=1$. Thus we assume that $n\geq 2$.

Suppose first that $F$ is an idempotent with all diagonal entries equal
to zero. We show that $C(F)$ is min-plus convex, using the proof strategy
of \cite[Proposition 5.5]{K_puredim}.

Let $x, y \in C(F)$ and let $z$ be the component-wise minimum of $x$ and $y$. It suffices to show that $z \in C(F)$. Since $x, y \in C(F)$ and $F$ is idempotent it is immediate that $x=F \otimes x$ and $y = F \otimes y$. Moreover, since $z \leq x$ and $z \leq y$ we see that $F \otimes z \leq F \otimes x = x$ and $F \otimes z \leq F \otimes y = y$, giving $F \otimes z \leq z$. Using the fact that all the diagonal entries of $F$ are zero yields
$$(F \otimes z)_i = \bigoplus_{j=1}^n F_{i,j} \otimes z_j \geq F_{i,i} \otimes z_i = z_i,$$
so that $F \otimes z \geq z$. Thus we have shown that $F \otimes z = z$, giving $z \in C(F)$ as required.

Now suppose that $C(E)$ is min-plus (as well as max-plus) convex. To
construct an idempotent $F$ with the desired properties, we use a
strategy based on the proof of \cite[Theorem 1.4]{K_puredim}, although
some extra complications result from the fact that $E$ need not have full
rank. The key
idea is to construct a matrix whose $i$th column is the infimum
(in $\ft^n$) of all elements $u \in C(E)$ such that $u_i \geq 0$ and show
that it has the desired properties. Of course, we must first check that
such infima exist.

Let $i \in \{1, \ldots, n\}$ and for each coordinate $j \neq i$, consider the set
$$\{u_j: u \in C(E), u_i \geq 0\}.$$
It is easy to see that this set is non-empty and, since $C(E)$ is finitely generated, it has a lower bound and hence an infimum. It follows from the fact $C(E)$ is closed that this infimum will be attained. Choose an element $w_j \in C(E)$ such that $w_{j,j}$ attains this minimum and $w_{j,i}\geq 0$. By the minimality of $w_{j,j}$ and the fact that $C(E)$ is closed under scaling, we must have $w_{j,i} = 0$. Now let $F_i$ be the minimum of all the $w_j$'s. Then $(F_i)_i = 0$ and $F_i$ is clearly less than or equal to all vectors $u \in C(E)$ with $u_i \geq 0$. Thus we have shown that $F_i$ is a lower bound for all elements $u \in C(E)$ such that $u_i \geq 0$. Suppose that $z$ is another lower bound. By the min-plus convexity of $C(E)$, we see that each $F_i$ is itself an element of $C(E)$ with $(F_i)_i \geq 0$. Hence $z \leq F_i$. Thus $F_i$ is the infimum of all elements $u \in C(E)$ such that $u_i \geq 0$. Let $F$ be the matrix whose $i$th column is $F_i$. We shall prove that $F$ is an idempotent in $M_n(\ft)$ with $C(F)=C(E)$. Since the diagonal entries of $F$ are $F_{i,i} = (F_i)_i=0$, this will complete the proof.

We first show that $F$ must be idempotent. It follows from the definition of matrix multiplication that for all $i$ and $j$,
$$(F^2)_{i,j} \geq F_{i,j} \otimes F_{j,j} = F_{i,j} + 0 = F_{i,j}.$$
Now let $i, j, k \in \{1, \ldots, n\}$. It will suffice to show that
$F_{i,j} \geq F_{i,k} \otimes F_{k,j}$, since then
$F_{i,j} \geq \bigoplus_{k=1}^n F_{i,k} \otimes F_{k,j} = (F^2)_{i,j}$. Consider $w = (-F_{k,j})\otimes F_j = -(F_j)_k\otimes F_j$. Then $w \in C(E)$ and $w_k \geq 0$. Since $F_k$ is the infimum of all such points we have $F_k \leq w$. In particular, comparing the $i$th entries of these elements, we have $(-F_{k,j})\otimes F_{i,j} =(-F_{k,j})\otimes (F_j)_i= w_i \geq (F_k)_i = F_{i,k}$ and so $F_{i,j} \geq F_{i,k} \otimes F_{k,j}$, as required.

It remains to show that $C(F)=C(E)$. Since each column $F_i$ of $F$ is contained in $C(E)$ we have $C(F) \subseteq C(E)$. We shall prove that every extremal point of $C(E)$ occurs, up to scaling, as a column of $F$, so that $C(E)\subseteq C(F)$.

We first show that the columns of $F$ are extremal points of $C(E)$. Suppose
for a contradiction that $F_i$ is not an extremal point of $C(E)$. Then by definition we may write $F_i$ as a finite sum of elements in $C(E)$ which are not multiples of $F_i$, say $F_i = z_1 \oplus \cdots \oplus z_k$. Let $j$ be such that $0=F_{i,i}=z_{j,i}$. Since $z_j \in C(E)$, $z_{j,i} \geq 0$ and $F_i$ is the infimum of all such points, we must have $F_i \leq z_j$. On the other hand, $z_j$ forms part of a linear combination for $F_i$, giving $z_j \leq F_i$. So $F_i=z_j$, contradicting that $z_j$ is not a multiple of $F_i$.

Now let $x$ be an extremal point of $C(E)$ and let $E_1, \ldots, E_n$ denote the columns of $E$. By Corollary~\ref{cor_eigen} we know that $x$ is a multiple of some column $E_k$ with $E_{k,k}=0$. We shall show that $F_k = E_k$ and hence $x$ occurs up to scaling as a column of $F$, as required. Since $F_k$ is an extremal point of $C(E)$, there exists $j$ and $\lambda \in \ft$ such that $F_k = \lambda \otimes E_j$ and $E_{j,j}=0$. Moreover, since $F_{k,k}=0$ we find that
$$0=F_{k,k} = \lambda \otimes (E_j)_k = \lambda + E_{k,j},$$
and hence $\lambda = -E_{k,j}$. On the other hand, $E_{k,k} = 0$ and $E_k \in C(E)$, so by the definition of $F_k$ we have $F_k \leq E_k$. In other words, $ -E_{k,j} \otimes E_j \leq E_k$. Thus $-E_{k,j} \leq \langle E_j| E_k\rangle$. Now, since $E_{k,k}=E_{j,j}=0$ we may apply Lemma~\ref{lem_bracket} to find $\langle E_j| E_k\rangle = E_{j,k}$ and  $\langle E_k| E_j\rangle = E_{k,j}$. Thus $E_{k,j} + E_{j,k} \geq 0$. Since $E$ is idempotent, it follows that $-E_{k,j} = E_{j,k}$. Thus $\lambda = -\langle E_k| E_j\rangle =\langle E_j| E_k\rangle$ and hence $E_k = \lambda \otimes E_j =F_k$, as required.
\end{proof}

\begin{theorem}
Let $E \in M_n(\ft)$ be an idempotent. Then there is an idempotent
$E' \in M_n(\ft)$ such that $E \GreenD E'$ and $E'$ has all diagonal entries equal to $0$.
\end{theorem}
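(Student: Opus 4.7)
The plan is to reduce to Theorem~\ref{thm_minplus}, which already converts the property ``column space is min-plus convex'' into ``there exists an idempotent with zero diagonal and the same column space.'' The difficulty is that the column space $C(E)$ of an arbitrary idempotent need not itself be min-plus convex when $E$ has rank $k$ strictly less than $n$; so the strategy is to produce an auxiliary idempotent $E'' \in M_n(\ft)$ with $E \GreenD E''$ and with $C(E'')$ min-plus convex, and then invoke Theorem~\ref{thm_minplus} on $E''$ to obtain the desired $E'$.

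To construct $E''$, I first use Proposition~\ref{prop_IJK} to obtain a $k \times k$ idempotent $F \in M_k(\ft)$ of full rank $k$ with $C(F) \cong C(E)$ as $\ft$-modules. Since $C(F)$ is a projective $k$-polytope inside $\ft^k$, Theorem~\ref{thm_minplusproj} tells us that $C(F)$ is min-plus convex. Next I fix any surjection $\sigma : \{1,\ldots,n\} \to \{1,\ldots,k\}$ and define $E'' \in M_n(\ft)$ by $E''_{ij} = F_{\sigma(i),\sigma(j)}$. Intuitively, $E''$ is a ``spread-out'' copy of $F$: its column vectors will be obtained by applying the coordinate-repetition map $\iota : \ft^k \to \ft^n$, $\iota(x)_i = x_{\sigma(i)}$, to the columns of $F$.

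The remaining verifications are mechanical. Surjectivity of $\sigma$ ensures that as $l$ ranges over $\{1,\ldots,n\}$ the value $\sigma(l)$ covers all of $\{1,\ldots,k\}$, so
\[
(E'')^2_{ij} \;=\; \bigoplus_{l=1}^n F_{\sigma(i),\sigma(l)} \otimes F_{\sigma(l),\sigma(j)} \;=\; (F^2)_{\sigma(i),\sigma(j)} \;=\; F_{\sigma(i),\sigma(j)} \;=\; E''_{ij},
\]
giving idempotency. The $j$th column of $E''$ is exactly $\iota(F_{\sigma(j)})$, so the columns of $E''$ are $\iota(F_1),\ldots,\iota(F_k)$ and $C(E'') = \iota(C(F))$; since surjectivity of $\sigma$ makes $\iota$ an injective $\ft$-linear map, we get $C(E'') \cong C(F) \cong C(E)$ and hence $E \GreenD E''$ by Theorem~\ref{thm_greenchar}(vi). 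Because $\iota$ also commutes with coordinate-wise minimum, $C(E'')$ inherits min-plus convexity from $C(F)$. Applying Theorem~\ref{thm_minplus} to $E''$ now produces an idempotent $E' \in M_n(\ft)$ with $E'_{i,i} = 0$ for all $i$ and $C(E') = C(E'') \cong C(E)$, so $E \GreenD E'$ by Theorem~\ref{thm_greenchar}(vi). The only real conceptual obstacle is recognising that $C(E)$ can fail to be min-plus convex inside $\ft^n$ when $E$ has deficient rank, so one must first re-embed an isomorphic copy of $C(E)$ into $\ft^n$ via a map that simultaneously preserves linear structure and coordinate-wise minima; the coordinate-repetition map $\iota$ is the natural choice, and once $E''$ is identified as the matrix with entries $F_{\sigma(i),\sigma(j)}$, everything else is routine.
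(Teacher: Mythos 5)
Your proof is correct and follows essentially the same route as the paper: reduce to a full-rank $k \times k$ idempotent $F$ via Proposition~\ref{prop_IJK}, obtain min-plus convexity of $C(F)$ from Theorem~\ref{thm_minplusproj}, re-embed $C(F)$ into $\ft^n$ by a coordinate-repetition map (which preserves both the linear structure and componentwise minima), and finish with Theorem~\ref{thm_minplus}. The only difference is cosmetic: the paper defines the re-embedded polytope $X$ (your $\iota(C(F))$ for the particular surjection $\sigma(i)=\min(i,k)$) and invokes Theorem~\ref{thm_IJKmain} to realise it as the column space of an $n \times n$ idempotent, whereas you write that idempotent down explicitly as $E''_{i,j}=F_{\sigma(i),\sigma(j)}$ and verify idempotency directly.
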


\begin{proof}
Let $E$ be an idempotent of rank $k$. By Proposition~\ref{prop_IJK},
$C(E) \cong C(F)$ for some $k \times k$ idempotent $F$. Since the various
notions of dimension described in Section 4 are isomorphism invariant, it is clear
that $F$ must have rank $k$ and hence, by Corollary~\ref{cor_eigen} for
example, $F$ has all diagonal entries equal to $0$. Moreover,
Theorem~\ref{thm_minplusproj} yields that $C(F)$ is min-plus convex.
Now let $X$ be the subset of $\ft^n$ consisting of all elements $v$ such
that: (i) the restriction of $v$ to the first $k$ entries yields an element
of $C(F)$ and (ii) all other entries of $v$ are equal to $v_k$. It is clear
from the definition that $X$ is a tropical polytope which is both max-plus
and min-plus convex. Moreover, it is easy to see that $X \cong C(F)$ and
hence $X$ is projective. By Theorem~\ref{thm_IJKmain}, $X$ is the column
space of some
$n \times n$ idempotent,  $X = C(E')$, say. Theorem~\ref{thm_minplus} and
the min-plus convexity of $X$ guarantee that $E'$ can be chosen with all
diagonal entries equal to zero. Hence we have shown that
$$C(E) \cong C(F) \cong X = C(E'),$$
where $E, E' \in M_n(\ft)$. By Theorem~\ref{thm_greenchar}(vi), this gives $E \GreenD E'$ as required.
\end{proof}
For the rest of this section we shall assume that $E$ is an idempotent matrix in $M_n(\ft)$ of rank $k$ whose diagonal entries are all equal to $0$. In the following proof we shall make use of the extended tropical semiring $\trop$, defined in Section 2.

\begin{theorem}
\label{thm_fullrank}
Let $E$ be an idempotent matrix in $M_n(\ft)$ whose diagonal entries are
all equal to $0$. Choose a fixed set of representatives of the critical
classes of $E$,  $\{c_1, \ldots, c_k\}$ say, and let $M$ be the $k \times n$
matrix, with entries in $\trop$ and rows indexed by $c_1, \dots c_k$,
defined by $M_{c_i, j} = 0 $ if $j = c_i$ and $M_{c_i, j} = -\infty$ otherwise. Then
 \begin{itemize}
\item[(i)] $F=M\otimes E \otimes M^T$ is an idempotent of rank $k$ in $M_k(\ft)$;
\item[(ii)] the map $\phi: A \mapsto M\otimes A \otimes M^T$ induces an isomorphism of groups between the $\GreenH$-class of $E$ and the $\GreenH$-class of $F$.
\end{itemize}
\end{theorem}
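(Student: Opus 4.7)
The matrix $M$ is essentially a selection operator: $M \otimes E$ extracts the rows of $E$ indexed by $c_1, \dots, c_k$, $E \otimes M^T$ extracts the columns, and more generally $\phi(A) = M \otimes A \otimes M^T$ is the $k \times k$ principal submatrix of $A$ on these positions. Setting $P := M^T \otimes M$, this $P$ is the $n \times n$ diagonal matrix over $\trop$ with $P_{c_i, c_i} = 0$ and $-\infty$ entries elsewhere. My plan is to derive everything from a single key identity,
\begin{equation*}
E \otimes P \otimes E \;=\; E, \qquad \text{equivalently} \qquad E_{j,\ell} \;=\; \bigoplus_{i=1}^{k} E_{j, c_i} \otimes E_{c_i, \ell} \ \text{ for all } j, \ell.
\end{equation*}
The inequality $\geq$ is immediate from $E \otimes E = E$. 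For the reverse, the column statement of Corollary~\ref{cor_basis} (applied with $A = E$) yields scalars $\beta_1, \dots, \beta_k \in \ft$ such that $E_\ell = \bigoplus_i \beta_i \otimes E_{c_i}$; reading off the $c_m$-th coordinate and using $E_{c_m, c_m} = 0$ forces $\beta_m \leq E_{c_m, \ell}$, and substituting these bounds into $E_{j,\ell} = \bigoplus_i \beta_i + E_{j, c_i}$ gives the required upper bound.

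Part (i) then falls out quickly. Idempotence of $F$ is $F \otimes F = M \otimes (E \otimes P \otimes E) \otimes M^T = M \otimes E \otimes M^T = F$. For the rank, I would construct an $\ft$-module isomorphism $C(E) \cong C(F)$ via the mutually inverse maps $\alpha: v \mapsto E \otimes M^T \otimes v$ and $\beta: u \mapsto M \otimes u$, both computed formally over $\trop$ but producing finite outputs. The fact that $\alpha$ and $\beta$ land in $C(E)$ and $C(F)$ respectively, and that $\alpha \beta = \mathrm{id}_{C(E)}$ and $\beta \alpha = \mathrm{id}_{C(F)}$, all reduce to $E \otimes P \otimes u = (E \otimes P \otimes E) \otimes u = E \otimes u = u$ for $u \in C(E)$ and to idempotence of $F$. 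Since $\ft$-module isomorphism preserves generator dimension, $C(F)$ has generator dimension $k$, so the $k \times k$ idempotent $F$ has rank $k$.

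For part (ii), the identity $E \otimes P \otimes E = E$ combined with the absorption $A = A \otimes E = E \otimes A$ valid for every $A \in H_E$ (a consequence of $A \mathrel{\GreenH} E$) yields $A \otimes P \otimes B = A \otimes (E \otimes P \otimes E) \otimes B = A \otimes E \otimes B = A \otimes B$ for all $A, B \in H_E$. Hence $\phi(A) \otimes \phi(B) = M \otimes A \otimes P \otimes B \otimes M^T = \phi(A \otimes B)$, so $\phi$ restricted to $H_E$ is a homomorphism. Writing $A^{-1}$ for the $H_E$-inverse of $A$, this gives $\phi(A) \otimes \phi(A^{-1}) = \phi(E) = F = \phi(A^{-1}) \otimes \phi(A)$ and $\phi(A) \otimes F = F \otimes \phi(A) = \phi(A)$, so by a standard semigroup argument $\phi(A) \in H_F$. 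Injectivity follows from the reconstruction formula $A = E \otimes A \otimes E = (E \otimes P \otimes E) \otimes A \otimes (E \otimes P \otimes E) = (E \otimes M^T) \otimes \phi(A) \otimes (M \otimes E)$, so $A$ is recovered from $\phi(A)$. For surjectivity, given $B \in H_F$ with inverse $B^{-1}$ I would set $A := (E \otimes M^T) \otimes B \otimes (M \otimes E)$ and $A' := (E \otimes M^T) \otimes B^{-1} \otimes (M \otimes E)$; using $M \otimes E \otimes M^T = F$ and $E \otimes P \otimes E = E$, a short calculation yields $\phi(A) = F \otimes B \otimes F = B$ together with $A \otimes A' = A' \otimes A = E$ and $E \otimes A = A \otimes E = A$, placing $A$ in $H_E$.

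The principal obstacle is the identity $E \otimes P \otimes E = E$; once it is in place, the rest of the argument is a systematic transport of structure, with the absorption relations of $H_E$ doing the work. A minor technical point to keep in mind is that expressions such as $E \otimes M^T$ live a priori over $\trop$, but evaluate to finite matrices when multiplied into finite data, so one can slide freely between $\ft$ and $\trop$ without issue.
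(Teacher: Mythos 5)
Your proof is correct, but it takes a genuinely different route from the paper's. You package everything into the single identity $E \otimes M^T \otimes M \otimes E = E$ (with $P = M^T \otimes M$ the partial identity supported on the critical representatives), prove it from Corollary~\ref{cor_basis}, and then derive idempotency of $F$, the homomorphism property, injectivity and surjectivity by purely formal manipulation together with the absorption laws $A = AE = EA$ for $A \in H_E$ and a standard Green's-relations criterion for membership in $H_F$ and $H_E$. The paper instead proves idempotency of $F$ by a direct path-weight comparison in $\Gamma_E$; proves the homomorphism property by an entrywise claim that the maximum in $(A \otimes B)_{c_i,c_j}$ is always attained at some index $c_t$, which it establishes by transporting the critical-class proportionalities of Corollary~\ref{cor_eigen}(ix),(x) to $A$ and $B$ via Lemma~\ref{lem_relations}; and proves surjectivity by constructing explicit matrices $N$ and $P$ built from entries of $E$ with $MN = I_k = PM^T$ and then arguing that $C(NGP) \cong C(E)$ and $C(NGP) \subseteq C(E)$ force equality. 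The two arguments rest on the same structural fact -- that the rows and columns indexed by $c_1, \ldots, c_k$ generate, because every node lies in the critical class of some $c_t$ -- but yours compresses it into one algebraic identity and avoids both the case analysis in the paper's Claim and the module-isomorphism/inclusion argument for surjectivity, at the cost of being less explicit about the underlying geometry (the paper's $\nu$ and $\rho$ being isomorphisms of column and row spaces, for instance, is information it records along the way). Two small points you should make explicit: the rank statement in (i) needs Corollary~\ref{cor_eigen}(vii) to know that $C(E)$ has generator dimension $k$ before transporting it to $C(F)$, and one should check (as you note in passing) that $(E \otimes M^T) \otimes B \otimes (M \otimes E)$ has all entries in $\ft$, which holds since it is a product of finite matrices over $\trop$.
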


\begin{proof}
(i) We first note that, by definition, $F$ is the $k \times k$ submatrix of $E$, whose entries are labelled by $c_1, \ldots, c_k$. For simplicity, we index the entries of $F$ by $c_1, \ldots, c_k$ so that $F_{c_i,c_j} = E_{c_i,c_j}$. Since each $c_i$ is a critical node, it is immediate that all diagonal entries of $F$ are equal to $0$, giving
$$(F \otimes F)_{c_i, c_j} \geq F_{c_i,c_i} \otimes F_{c_i, c_j} = F_{c_i,c_j}.$$
On the other hand, $(F \otimes F)_{c_i, c_j}$ is equal to the maximum weight of a path of length 2 from node $c_j$ to node $c_i$ via one of $c_1, \ldots, c_k$. This is clearly bounded above by the maximum weight of a path of length 2 from node $c_j$ to node $c_i$ via \emph{any} node $1, \ldots, n$. Thus $(F \otimes F)_{c_i, c_j} \leq (E \otimes E)_{c_i,c_j} = E_{c_i,c_j} = F_{c_i,c_j}$, as required.

(ii) We note that $\phi$ maps each $n \times n$ matrix $A$ to the $k \times k$ submatrix of $A$, with entries labelled by $c_1, \ldots, c_k$ and, in particular, $F=\phi(E)$.  We write $H_E$ to denote the $\GreenH$-class of $E$ in $M_n(\ft)$ and $H_F$ to denote the $\GreenH$-class of $F$ in $M_k(\ft)$. Let $A \in H_E$. By Corollary \ref{cor_basis}, the columns of $A$ labelled by $c_1, \ldots, c_k$ form a minimal generating set for the column space of $A$ and the rows of $A$ labelled by $c_1, \ldots, c_k$ form a minimal generating set for the row space of $A$. This gives $C(AM^T) = C(A) = C(E)$ and $R(MA) = R(A) = R(E)$. Let $\nu: C(E) \rightarrow C(F)$ and $\rho: R(E) \rightarrow R(F)$ be the linear maps given by $\nu: v \mapsto M \otimes v$ and $\rho: v \mapsto v\otimes M$. Since $\nu$ maps the columns of $E$ labelled by $c_1, \ldots, c_k$ to the columns of $F$ labelled by $c_1, \ldots, c_k$, we see that $\nu$ is onto. Similarly, $\rho$ maps the rows of $E$ labelled by $c_1, \ldots, c_k$ to the rows of $F$ labelled by $c_1, \ldots, c_k$, giving that $\rho$ is onto. (In fact, it is straight-forward to check that these maps are isomorphisms of $\ft$-modules.) It follows that $C(F) = \nu(C(E)) = \nu(C(AM^T)) = C(MAM^T)= C(\phi(A))$ and $R(F) = \rho(R(E)) = \rho(R(MA)) = R(MAM^T)=R(\phi(A))$. Thus, for all $A \in H_E$, we have shown that $\phi(A) \GreenH F$. In other words, $\phi(A) \in H_F$ for all $A \in H_E$ so that $\phi$ restricts to a map from $H_E$ to $H_F$. Moreover, it follows from the fact that $\nu$ and $\rho$ are isomorphisms that $\phi$ is injective. We claim that $\phi: H_E \rightarrow H_F$ is an isomorphism of groups.

Let $A, B \in H_E$. We must show that $\phi(A \otimes B) = \phi(A) \otimes \phi(B)$. In other words, we want to show that $MABM^T = MAM^TMBM^T$ for all $A, B \in H_E$. It is straight-forward to verify from the definition of matrix multiplication that this amounts to proving the following claim:

{\bf Claim: }For every pair $i, j \in \{1, \ldots, k\}$ there exists $t \in \{1, \ldots, k\}$ such that $(A\otimes B)_{c_i,c_j} = A_{c_i, c_t} \otimes B_{c_t,c_j}.$

Suppose for contradiction that that this is not the case. Then there exists $s \notin \{c_1, \ldots, c_k\}$ such that
$$(A\otimes B)_{c_i,c_j} = A_{c_i, s} \otimes B_{s,c_j} >  A_{c_i, c_l} \otimes B_{c_l,c_j},$$
for all $l \in \{1, \ldots, k\}$. Since $E$ has all diagonal entries equal to $0$, we know that $s$ occurs in the same strongly connected component as $c_t$ for some $t \in \{1, \ldots, k\}$. Corollary~\ref{cor_eigen} yields that column $s$ of $E$ must be $\alpha$ times column $c_t$ of $E$, whilst row $s$ of $E$ must be $-\alpha$ times row $c_t$ of $E$, for some $\alpha \in \ft$. Since $A$ and $B$ are $\GreenH$-related to $E$, it follows from Lemma~\ref{lem_relations} that we also have column $s$ of $A$ is equal to $\alpha$ times column $c_t$ of $A$ and row $s$ of $B$ is equal to $-\alpha$ times row $c_t$ of $B$. Thus
$$(A\otimes B)_{c_i,c_j} = A_{c_i, s} \otimes B_{s,c_j} =  \alpha \otimes A_{c_i, c_t} \otimes -\alpha \otimes B_{c_t,c_j} = A_{c_i, c_t} \otimes B_{c_t,c_j},$$
giving a contradiction. So $\phi$ is a homomorphism of groups.

It remains to show that $\phi$ is surjective. Let $N$ denote the $n \times k$
matrix defined by $N_{s, c_t} = 0 $ if $s = c_t$, $N_{s, c_t} = E_{s,c_t} $ if $s \notin \{c_1, \ldots, c_k\}$ and $N_{s, c_t}  = -\infty$ otherwise.
Let $P$ denote the $k \times n$ matrix defined by $P_{c_t, s} = 0 $ if $s = c_t$, $P_{c_t, s} = E_{c_t,s} $ if $s \notin \{c_1, \ldots, c_k\}$ and $P_{c_t, s}  = -\infty$ otherwise. Notice that
$$\phi(NKP) = M(NKP)M^T =(MN)K(PM^T)= K,$$
for all $K \in M_k(\ft)$. Let $G \in H_F$. We claim that $NGP \in H_E$, hence giving that $\phi$ is a surjection.

By the definition of $P$, the columns of $NGP$ labelled by $c_1, \ldots, c_k$ are the columns of $NG$, and all other columns are linear combinations of these. Thus $C(NGP) = C(NG)$. Since $G \in H_F$ we have that $FG=G$, giving $C(NG)=C(NFG)$ and it is clear that $C(NFG) \subseteq C(NF)$. Finally, it is straight-forward to check from the definitions of $N$ and $F$ that $NF$ is the $n \times k$ submatrix of $E$ whose columns are labelled by $c_1, \ldots, c_k$. This gives that $C(NF)=C(E)$ and hence we have shown that $C(NGP) \subseteq C(E)$.

Consider the linear map $\alpha: \ft^k \rightarrow \ft^n$ given by $v \mapsto N \otimes v$. It is clear from the definition of $N$ that $\alpha$ is injective. Moreover $\alpha$ maps the $c_i$th column of $G$ to the $c_i$th column of $NGP$, inducing a map $\alpha': C(G) \rightarrow C(NGP)$. As noted above, the columns of $NGP$ labelled by $c_1, \ldots, c_k$ form a generating set for $C(NGP)$ and hence $\alpha'$ is onto. This shows that $C(G) \cong C(NGP)$ via $\alpha'$. Hence we have $C(NGP) \cong C(G) =C(F) \cong C(E)$ and $C(NGP) \subseteq C(E)$. Since $C(NGP), C(E) \subseteq \ft^n$, the only way this can happen is if $C(NGP) = C(E)$. Thus $NGP \GreenR E$.

Dual arguments will show that $R(NGP) \subseteq R(E)$ and $R(E) \cong R(NGP)$, giving $NGP \GreenL E$ and hence $NGP \GreenH E$, as required.
\end{proof}

\section{The $\GreenH$-class of an idempotent of full rank}

Let $E$ be an idempotent in $M_n(\ft)$ and let $H_E$ denote the 
$\GreenH$-class of $E$. We can consider the matrices in $H_E$ as maps from 
$\ft^n$ to $\ft^n$, acting by left multiplication, and it follows from 
Theorem~\ref{thm_aut} that these maps restrict to $\ft$-module 
automorphisms of $C(E)$. We shall show that when $E$ has rank $n$, these 
automorphisms are affine linear maps, \textbf{in the classical sense}. It 
then follows that every automorphism of $C(E)$ extends to an automorphism 
of $\ft^n$.

We note that the \emph{boundary} of $C(E)$ is the set of all points
$y \in C(E)$ for which the equation $E \otimes x = y$ has multiple solutions.
Since $E$ acts trivially on $C(E)$, it follows that for every 
boundary point $y$ there exists some exterior point $z \notin C(E)$ such 
that $E \otimes z = y$. In fact, it is easy to see that every exterior 
point must be mapped to the boundary.

We shall need the following fact about the action of full rank idempotents on tropical $n$-space, which follows from results in
\cite{Butkovic10}.

\begin{lemma}
\label{lem_exterior}
Let $E$ be an idempotent of rank $n$ in $M_n(\ft)$, and consider
the column space $C(E)$ as a subset of $\mathbb{R}^n$ equipped with the usual
topology. Then left multiplication by $E$ maps all points exterior to $C(E)$
onto the boundary of $C(E)$.
\end{lemma}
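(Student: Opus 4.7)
The plan is to argue directly from the algebraic definition of boundary given just before the lemma: a point $y \in C(E)$ lies in the boundary precisely when the equation $E \otimes x = y$ admits more than one solution. Given any exterior point $z \in \ft^n \setminus C(E)$, I would exhibit two distinct preimages of $y := E \otimes z$ under left multiplication by $E$, which immediately places $y$ on the boundary.

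The first preimage is $z$ itself, by construction. The second is $y$: idempotence of $E$ gives $E \otimes y = (E \otimes E) \otimes z = E \otimes z = y$, and $y$ lies in $C(E)$ because it is a max-plus linear combination of the columns of $E$. For distinctness, Corollary~\ref{cor_eigen}(i) identifies $C(E)$ with the eigenspace of $E$ at eigenvalue $0$, so if $z$ coincided with $y$ we would have $E \otimes z = z$, forcing $z \in C(E)$ and contradicting the choice of $z$.

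There is really no technical obstacle here; the only point is that idempotence makes the argument essentially immediate given the unusual algebraic definition of boundary. The rank $n$ hypothesis is not needed for this containment step, but by Theorem~\ref{thm_IJKmain} it guarantees that $C(E)$ has pure topological dimension $n$ in $\mathbb{R}^n$, so that the algebraic notion of boundary coincides with the usual topological one (the viewpoint of \cite{Butkovic10}); this is presumably why the full-rank hypothesis is imposed. If one reads ``onto'' as genuine surjectivity, one combines the above with the converse observation already in the paragraph preceding the lemma: every boundary point $y$ is a fixed point of $E$ and, having multiple preimages, must admit some preimage $z \neq y$, which necessarily lies outside $C(E)$ since every element of $C(E)$ is fixed by $E$.
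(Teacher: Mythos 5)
Your central observation---that for an exterior point $z$ the two vectors $z$ and $y = E \otimes z$ are distinct preimages of $y$ under left multiplication by $E$, distinct because $y \in C(E)$ while $z \notin C(E)$---is exactly the computation at the heart of the paper's own proof, where it appears in contrapositive form: uniqueness of the preimage of an interior point forces $x = E \otimes x$, contradicting $x \notin C(E)$. So the elementary part of your argument is correct and coincides with the paper's.

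The gap is in the step you yourself flag with ``presumably''. The lemma, and its later uses (for instance in Lemma~\ref{lem_Amaps}, and in the proof of Theorem~\ref{thm_upstairs} where boundary points are realised as limits of interior points), concern the \emph{topological} boundary of $C(E)$ in $\mathbb{R}^n$. Producing two preimages of $y$ places $y$ in the algebraically defined boundary; to conclude that $y$ lies in the topological boundary you need the implication that every point in the topological interior of $C(E)$ has a unique preimage under $E$. This does not follow merely from $C(E)$ having pure dimension $n$---Theorem~\ref{thm_IJKmain} gives you pure dimension but says nothing about preimages---and it is not a matter of ``viewpoint'': it is a substantive fact about matrices with maximum cycle mean $0$ whose powers stabilise, which the paper imports as \cite[Theorem~6.2.14]{Butkovic10}. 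With that specific citation in place your argument closes and is essentially identical to the paper's; without it, the identification of the two notions of boundary is asserted rather than proved, and that identification is the real content of the lemma.
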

\begin{proof}
Suppose false for a contradiction, and let $x \notin C(E)$ be such that
$Ex$ does not lie on the boundary of $C(E)$. Certainly $Ex \in C(E)$, so it
must be that $Ex$ lies in the interior of $C(E)$.
 Also since $E$ is idempotent, it has eigenspace
$C(E)$ with eigenvalue $0$, and $E = E^k$ for all $k$. Thus, by
\cite[Theorem~6.2.14]{Butkovic10}, each point in the interior of $C(E)$ has
a unique preimage under the action of $E$. But since $E$ is idempotent
we have $Ex = E(Ex)$, so $x = Ex$, which contradicts the fact that
$x \notin C(E)$.
\end{proof}

Now let $A \in H_E$. Thus $C(A)=C(E)$. Since $E$ has rank $n$, it follows that the columns of $A$ give a minimal generating set for $C(E)$. Since
this minimal generating set is unique up to permutation and scaling, we can find a permutation $\sigma$ and scalars $\lambda_1, \ldots, \lambda_n$ such that $A_i = \lambda \otimes E_{\sigma(i)}$ for all $i$. It now follows easily from \cite[Corollary 3.1.3]{Butkovic10}, for example, that the equation $A \otimes x = y$ has a unique solution if and only if the equation $E \otimes x = y$ has a unique solution. Thus it is clear that every boundary point $y$ of $C(E)$ is such that the equation $A \otimes x = y$ does \emph{not} have a unique solution. Since $A$ acts on $C(E)$ by an automorphism, it follows that for every boundary point $y$ there exists some exterior point $z \notin C(E)$ such that $A \otimes z = y$.

\begin{lemma}
\label{lem_Amaps}
Let $E$ be an idempotent of rank $n$ in $M_n(\ft)$, and let $A$ be an element in the $\GreenH$-class of $E$. Define $\phi_A: \ft^n \rightarrow \ft^n$ to be the map given by left multiplication by $A$. Then
\begin{itemize}
\item[(i)] $\phi_A$ maps interior points of $C(E)$ to interior points;
\item[(ii)] $\phi_A$ maps boundary points of $C(E)$ to boundary points and
\item[(iii)] $\phi_A$ maps all points exterior to $C(E)$ onto the boundary of $C(E)$.
\end{itemize}

[Dually, right multiplication by $A$ induces an $\ft$-module automorphism of $R(E)$, mapping interior points to interior points and boundary points to boundary points.]
\end{lemma}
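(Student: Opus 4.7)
The plan is to exploit three facts that are already in hand: (a) because $A\,\GreenH\,E$ and $E$ is idempotent, the identities $AE = EA = A$ hold; (b) by Theorem~\ref{thm_aut}, $\phi_A$ restricts to a bijection $C(E) \to C(E)$, and on all of $\ft^n$ we have $\phi_A(\ft^n) = C(A) = C(E)$; and (c) the equivalence noted just before the lemma, that the equation $A \otimes x = y$ has a unique solution if and only if $E \otimes x = y$ does. Consequently the boundary of $C(E)$ may be recharacterised as the set of $y \in C(E)$ for which $A \otimes w = y$ has more than one solution.

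For part~(iii), I will take $z \notin C(E)$. Lemma~\ref{lem_exterior} places $Ez$ on the boundary of $C(E)$, so in particular $Ez \in C(E)$ and $z \neq Ez$. Using $AE = A$ gives $Az = A(Ez)$, exhibiting $z$ and $Ez$ as two distinct $A$-preimages of $Az$; the characterisation in the previous paragraph then forces $Az$ to lie on the boundary of $C(E)$. For part~(ii), if $y$ is a boundary point one can choose distinct $x_1, x_2$ with $Ex_1 = Ex_2 = y$, and the same calculation $Ax_i = A(Ex_i) = Ay$ for $i = 1, 2$ produces two distinct $A$-preimages of $Ay$, so $Ay$ lies on the boundary.

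For part~(i), the cleanest route is to invoke the group inverse $A^{-1}$ of $A$ in $H_E$ (available because $H_E$ is a group with identity $E$). Applying part~(ii) to $A^{-1} \in H_E$ shows that $\phi_{A^{-1}}$ sends boundary to boundary; since $A^{-1}A = E$ acts as the identity on $C(E)$, if some interior point $y$ had $Ay$ on the boundary then $y = Ey = A^{-1}(Ay)$ would also lie on the boundary, a contradiction. The bracketed dual statement for right multiplication on $R(E)$ follows by applying the whole argument to $E^T$ and $A^T$, using that $A\,\GreenH\,E$ iff $A^T\,\GreenH\,E^T$ and that rank is preserved by transposition.

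I do not expect a major technical obstacle here. The only point that warrants care is that in parts~(ii) and~(iii) one must cite the $A$-version of the boundary characterisation rather than attempting to construct $E$-preimages of $Ay$ or $Az$ directly; conceptually these exist, but writing them down would be distractingly fiddly, whereas the equivalence established just before the lemma turns the argument into a one-line calculation once $AE = EA = A$ has been observed.
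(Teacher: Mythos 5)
Your argument is correct, and it draws on exactly the same toolkit as the paper's proof --- Lemma~\ref{lem_exterior}, the identities $AE = EA = A$, the inverse $A'$ of $A$ in the group $H_E$, and the observation made just before the lemma that $A \otimes x = y$ has a unique solution if and only if $E \otimes x = y$ does --- but the logical order is reversed. The paper establishes (i) first, by contradiction: if an interior point $x$ were sent to a boundary point $y$, then $x$ would be the \emph{unique} solution of $Az = y$ (a second solution inside $C(E)$ would contradict injectivity of $\phi_A$ on $C(E)$, and an exterior solution $z$ would force $Ez = A'Az = x$ to be interior, contradicting Lemma~\ref{lem_exterior}), whereas boundary points are precisely those with non-unique $A$-preimages; parts (ii) and (iii) are then short consequences of (i) via $A'$. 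You instead prove (iii) and (ii) directly and constructively, exhibiting two distinct $A$-preimages ($z$ and $Ez$ in one case, $x_1$ and $x_2$ in the other) of the image point and citing the $A$-version of the boundary characterisation, and only then deduce (i) by applying (ii) to $A'$. Your decomposition buys a cleaner, uniqueness-free treatment of (ii) and (iii), while the paper's concentrates the work in a single case and gets the other two parts as immediate corollaries; both are valid, and the dual statement follows by transposition in either version.
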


\begin{proof}
It is clear that the image of $\phi_A$ is $C(E)$ and it follows from Theorem~\ref{thm_aut} that $\phi_A$ restricts to an automorphism of $C(E)$. Since $H_E$ is a group, there exists $A' \in H_E$ such that $AA'=A'A=E$ and hence $\phi_A$ and $\phi_{A'}$ are mutually inverse on $C(E)$.

(i) Let $x \in C(E)$ be an interior point of the column space. Suppose for contradiction that $\phi_A$ maps $x$ to some point $y$ on the boundary of $C(E)$. Then $\phi_{A'}$ must map $y$ back to $x$, so that $Ex = A'Ax = A'y = x$.
Consider the equation $Az=y$. It is easy to see that $x$ must be the unique
solution (since $\phi_A$ is an automorphism of $C(E)$, no other element of $C(E)$ can be a solution; if $z \notin C(E)$ were a solution, then $Ez=A'Az=A'y=x$, contradicting Lemma \ref{lem_exterior}). However, we have seen that given any point $y$ on the boundary there exists some $z \notin C(A)$ such that $Az=y$.

(ii) It follows immediately that $A$ must map boundary points to boundary points too; if $A$ maps a boundary point $y$ to an interior point $x$, then $A'$ must map $x$ back to $y$, contradicting part (i).

(iii) Finally, let $z \in \ft^n$ with $z \notin C(E)$ and suppose for contradiction that $A$ maps $z$ to an interior point $x$. By Lemma \ref{lem_exterior} we know that $E$ must map $z$ to a point on the boundary, $y$ say. Now $A'x = A'Az = Ez=y$, giving that $A'$ maps an interior point to a
boundary point, contradicting part (i).
\end{proof}

Recall that $\trop =\ft\cup\{-\infty\}$. We briefly consider the semigroup $M_n(\trop)$. It is clear that this is a monoid, whose identity element is the $n \times n$ matrix with $0$ entries on the diagonal and $-\infty$ entries off the diagonal. It is well known that the units of $M_n(\trop)$ are precisely the \emph{tropical monomial matrices}, that is, those matrices with exactly one entry not equal to $-\infty$ in each row and in each column. Thus it is clear that every unit has the form
$D(\lambda_1, \ldots, \lambda_n)P_{\sigma}$, where $D(\lambda_1, \ldots, \lambda_n)$ is a diagonal matrix with entries $\lambda_1, \ldots, \lambda_n$ and $P_{\sigma}$ is a tropical permutation matrix whose $i$th row has a $0$ in the $\sigma(i)$th position and $-\infty$ entries elsewhere. We shall now show that given an idempotent $E$ of rank $n$ in $M_n(\ft)$, the corresponding $\GreenH$-class $H_E$ is isomorphic to a certain subgroup of the group of units in $M_n(\trop)$.

\begin{theorem}\label{thm_upstairs}
Let $E$ be an idempotent of rank $n$ in $M_n(\ft)$ and let $H_E$ denote
the $\GreenH$-class of $E$. Let $G_E$ be the set of all units
$G \in M_n(\trop)$ which commute with $E$. Then there is a group isomorphism:
$$G_E \to H_E,  \ G \mapsto GE = EG.$$
\end{theorem}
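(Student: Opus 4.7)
The plan is to verify that the map is well-defined, is a homomorphism, and is both injective and surjective. Well-definedness is immediate: any $G \in G_E$ has finite entries in a permutation pattern, so $GE \in M_n(\ft)$, and the chain $C(EG) \subseteq C(E) = C(EGG^{-1}) \subseteq C(EG)$ (using commutation and the fact that $G$ is a unit in $M_n(\trop)$) gives $C(GE) = C(E)$; dually $R(GE) = R(E)$, so $GE \in H_E$. The homomorphism property follows from $(G_1 E)(G_2 E) = G_1(EG_2)E = G_1 G_2 E^2 = G_1 G_2 E$. For injectivity, I would invoke that rank $n$ together with Corollary~\ref{cor_eigen}(vii) forces every strongly connected component of the critical graph of $E$ to be a singleton, so $E_{i,i} = 0$ and $E_{i,j} + E_{j,i} < 0$ strictly whenever $i \neq j$ (equality would make the $2$-cycle through $i$ and $j$ critical). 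Writing $G$ via a permutation $\pi$ and scalars $\lambda_i = G_{i,\pi(i)}$, the condition $GE = E$ evaluated at the positions $(i,i)$ and $(i,\pi(i))$ yields $E_{i,\pi(i)} + E_{\pi(i),i} = 0$, forcing $\pi = \mathrm{id}$ and $\lambda_i = 0$.

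Surjectivity is the main obstacle. Given $A \in H_E$, the discussion preceding the theorem provides a permutation $\tau$ and scalars $\mu_j$ with $A_j = \mu_j \otimes E_{\tau(j)}$. I would define $G \in M_n(\trop)$ to be the monomial unit with $G_{\tau(j), j} = \mu_j$ and $-\infty$ elsewhere; a direct computation gives $(EG)_{i,j} = \mu_j + E_{i,\tau(j)} = A_{i,j}$, so $EG = A$. Dually, since the rows of $A$ minimally generate $R(A) = R(E)$, there exist a permutation $\rho$ and scalars $\nu_i$ with $A^i = \nu_i \otimes E^{\rho(i)}$, and the corresponding monomial matrix $G'$ defined by $G'_{i,\rho(i)} = \nu_i$ satisfies $G'E = A$. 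It then suffices to prove $G = G'$, i.e., $\rho = \tau^{-1}$ and $\nu_i = \mu_{\tau^{-1}(i)}$, since $GE = G'E = A = EG$ would then show $G$ commutes with $E$ and lies in $G_E$.

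The matching of the column permutation $\tau$ with the row permutation $\rho$ is the heart of the proof. Equating both descriptions of $A_{i,j}$ gives $\mu_j + E_{i,\tau(j)} = \nu_i + E_{\rho(i),j}$ for all $i,j$. Substituting $i = \tau(j)$ yields $\mu_j - \nu_{\tau(j)} = E_{\alpha(j),j}$, where $\alpha = \rho \circ \tau$; substituting $i = \tau(j)$ and replacing $j$ by $\alpha(j)$ yields $\nu_{\tau(j)} - \mu_{\alpha(j)} = E_{\tau(j),\tau(\alpha(j))}$. Adding produces the telescoping identity
\begin{equation*}
\mu_j - \mu_{\alpha(j)} = E_{\tau(j),\tau(\alpha(j))} + E_{\alpha(j),j},
\end{equation*}
and summing around any cycle $j_0, j_1 = \alpha(j_0), \ldots, j_k = j_0$ of the permutation $\alpha$ collapses the left side to zero, leaving
\begin{equation*}
\sum_{l=0}^{k-1} E_{j_{l+1},j_l} + \sum_{l=0}^{k-1} E_{\tau(j_l),\tau(j_{l+1})} = 0.
\end{equation*}
Both sums are weights of directed cycles in $\Gamma_E$, each at most zero since the maximum cycle mean of the idempotent $E$ is zero; hence both vanish. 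The first cycle is therefore critical, placing $j_0, \ldots, j_{k-1}$ in a common strongly connected component, which by full rank is a singleton. So $k = 1$ and $\alpha = \mathrm{id}$; substituting back yields $\nu_i = \mu_{\tau^{-1}(i)}$, and the argument is complete. The entire proof hinges on this cycle-telescoping rigidity forced by the zero maximum cycle mean of $E$.
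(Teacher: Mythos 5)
Your proof is correct, but it takes a genuinely different route from the paper's for the two substantive steps. For injectivity the paper argues that $EGH^{-1}=E$ forces $GH^{-1}$ to be the identity because no column of a rank-$n$ idempotent is a scaling of another; your kernel computation via the strict inequalities $E_{i,j}+E_{j,i}<0$ for $i\neq j$ reaches the same conclusion by a direct entrywise calculation. The real divergence is in surjectivity. The paper also starts from the unit $G$ with $A=EG$ obtained from the uniqueness of minimal generating sets, but then proves $GE=EG$ \emph{topologically}: it invokes Lemma~\ref{lem_exterior} and Lemma~\ref{lem_Amaps} to show $G$ must send interior points of $C(E)$ into $C(E)$ (where $E$ acts trivially), and extends to the boundary by continuity and pure dimension, concluding $EGE=GEE$ and hence $A=GE$. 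You instead produce a second unit $G'$ from the row decomposition of $A$ and prove $G=G'$ by the cycle-telescoping identity, using only that the maximum cycle mean of $E$ is $0$ and that rank $n$ forces singleton critical classes. Your argument is entirely combinatorial and self-contained modulo Corollary~\ref{cor_eigen}, avoids the analytic input from \cite{Butkovic10} (unique preimages of interior points, density of the interior), and makes the commutation $GE=EG$ visibly equivalent to the compatibility $\rho=\tau^{-1}$, $\nu_i=\mu_{\tau^{-1}(i)}$ between the row and column descriptions of $A$ --- a rigidity statement that could transfer to settings where the topological lemmas are unavailable. The paper's route, by contrast, reuses Lemma~\ref{lem_Amaps}, which it needs anyway for Theorem~\ref{thm_extendaffine}. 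The only step you gloss over is that the rows of $A$ form a \emph{minimal} generating set of $R(A)=R(E)$ unique up to permutation and scaling, but this is exactly the dual of the column statement the paper itself records (Corollary~\ref{cor_basis} together with the uniqueness of extremal generators), so nothing is missing.
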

\begin{proof}
Let $\gamma: G_E \rightarrow H_E$ be the map defined by $G\mapsto GE = EG$.
It is easy to see that $\gamma$ is a homomorphism of groups; indeed, for $G,H \in G_E$,
$$\gamma(GH)= E(GH)=E^2(GH)= E(EG)H = (EG)(EH)=\gamma(G)\gamma(H).$$

For injectivity, suppose $EG = EH$ for some units $G,H \in G_E$. Then
$EGH^{-1} = E$. Note that since $GH^{-1}$ is a monomial matrices, each
column of $EGH^{-1}$ is a scaling of a permutation of a column of $E$.
But since $E$ has rank $n$, no column of $E$ is a scaling of another
column. It follows that $GH^{-1}$ must be the identity element, giving
that $G=H$.

It remains to show that $\gamma$ is surjective. Let $A \in H_E$. By
Corollary~\ref{cor_basis}, the columns of $A$ provide a minimal
generating set for the column space of $E$. It follows that we may choose
a unit $G \in M_n(\trop)$ such that $A = EG$. Let $x$ be an interior
point of $C(E)$. By Lemma \ref{lem_Amaps}, $A$ maps $x$ to an interior
point. It follows that $G$ must also map $x$ to an interior point; if
not then, by Lemma~\ref{lem_exterior}, $EG$ maps $x$ to the boundary
of $C(E)$, contradicting $A=EG$. Thus it is easy to see that $Gx = Ax$
for all interior points $x$. Since $C(E)$ has pure dimension, every
boundary point is a limit of interior points. Since $G$ and $A$ are
continuous maps, it follows that $Gx = Ax$ for all $x \in C(E)$. In
particular, by Theorem~\ref{thm_aut}, the action of $G$ restricted to $C(E)$ is an
$\ft$-module automorphism. Now for all $x \in C(E)$ we have
$EGx=Gx=GEx$, giving
$$EG=A=AE=EGE=GEE=GE.$$
\end{proof}

Our main theorems combine to prove some results which may be of independent
interest.

\begin{theorem}\label{thm_extendaffine}
Every automorphism of a projective $n$-polytope in $\ft^n$
\begin{itemize}
\item[(i)] extends to an automorphism of $\ft^n$; and
\item[(ii)] is a (classical) affine linear map.
\end{itemize}
\end{theorem}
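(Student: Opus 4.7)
The plan is to transport the automorphism up through Theorems~\ref{thm_aut} and~\ref{thm_upstairs} into the group $G_E$ of units of $M_n(\trop)$ that commute with the associated idempotent, and then observe that tropical monomial matrices act on $\ft^n$ as classical affine linear automorphisms. This way, the extension and the affine linearity come hand-in-hand from the explicit form of a unit in $M_n(\trop)$.

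More precisely, let $X \subseteq \ft^n$ be a projective $n$-polytope and let $f$ be an $\ft$-module automorphism of $X$. By Theorem~\ref{thm_IJKmain}, there is an idempotent $E \in M_n(\ft)$ with $C(E) = X$, and since $X$ has generator dimension $n$ the matrix $E$ has rank $n$. Theorem~\ref{thm_aut} then supplies a matrix $A$ in the $\GreenH$-class $H_E$ such that $f(x) = A \otimes x$ for every $x \in X$, and Theorem~\ref{thm_upstairs} produces a unit $G \in M_n(\trop)$ which commutes with $E$ and satisfies $A = E \otimes G = G \otimes E$. The decisive input from the proof of Theorem~\ref{thm_upstairs} is the stronger statement that $G \otimes x = A \otimes x$ for all $x \in C(E)$, so left multiplication by $G$ restricts on $X$ to $f$ itself.

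To finish, I consider the map $\tilde{f} : \ft^n \to \ft^n$ defined by $x \mapsto G \otimes x$. Since $G$ is a unit in $M_n(\trop)$, it has the form $D(\lambda_1, \ldots, \lambda_n) P_\sigma$ with $\lambda_i \in \ft$ and $\sigma$ a permutation, whence the explicit formula $(G \otimes x)_i = \lambda_i + x_{\sigma(i)}$. This formula shows at once that $\tilde{f}$ sends $\ft^n$ into $\ft^n$ (no $-\infty$ ever appears), that $\tilde{f}$ is classically affine linear — a permutation of coordinates followed by a translation — and that its inverse, namely left multiplication by the monomial matrix $G^{-1}$, is of the same form and so is also an $\ft$-module automorphism of $\ft^n$. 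Combined with the identity $\tilde{f}|_X = f$, this simultaneously yields (i) and (ii).

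There is no substantive obstacle at this stage: essentially all of the work has already been packed into Theorem~\ref{thm_upstairs}, particularly the interior/boundary analysis of Lemma~\ref{lem_Amaps} that forces the actions of $G$ and $A$ to agree on all of $C(E)$. The only remaining point — that the natural action on $\ft^n$ of a tropical monomial matrix with finite nonzero entries is a classical affine transformation — is immediate from the coordinate formula above.
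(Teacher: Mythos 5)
Your proposal is correct and follows essentially the same route as the paper: realise $X$ as $C(E)$ for a full rank idempotent $E$, use Theorem~\ref{thm_aut} to get $A\in H_E$ with $f(x)=A\otimes x$, lift to a unit $G$ via Theorem~\ref{thm_upstairs}, and read off affine linearity from the monomial form $D(\lambda_1,\ldots,\lambda_n)P_\sigma$. The only cosmetic difference is that the paper obtains $G\otimes x=A\otimes x$ on $C(E)$ directly from $A=GE$ and $E\otimes x=x$, rather than by citing the interior/boundary analysis inside the proof of Theorem~\ref{thm_upstairs}.
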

\begin{proof}
Let $X \subseteq \ft^n$ be a projective $n$-polytope and let
$\phi : X \to X$ be an $\ft$-module automorphism. By Theorem~\ref{thm_IJKmain},
$X = C(E)$ for some full rank idempotent $E \in M_n(\ft)$. By
Theorem~\ref{thm_aut}, there is a matrix $A \in H_E$ such that
$\phi(x) = Ax$ for all $x \in C(E) = X$. By Theorem~\ref{thm_upstairs}
there is a unit $G \in M_n(\trop)$ such that $A=EG=GE$. Now for
any $x \in X = C(E)$ we have
$$Gx = GEx = Ax = \phi(x),$$
so the map
$$\ft^n \to \ft^n, \ x \mapsto Gx$$
is an automorphism of $\ft^n$ extending $\phi$ as required to establish (i).

Now since $G$ is a monomial matrix, we have

$$G_{i,j}=\begin{cases}
\lambda_i &\mbox{ if }j=\sigma(i)\\
-\infty &\mbox{otherwise}.
\end{cases}$$
Thus, for all $x \in C(E)$,
$$(A \otimes x)_i = (GE\otimes x)_i = (G \otimes x)_i =\lambda_i \otimes x_{\sigma(i)} = x_{\sigma(i)} + \lambda_i,$$
giving $\phi_A(x) = P \cdot x + \lambda$, where $P$ is the (classical) permutation matrix corresponding to $\sigma$ and $\lambda = (\lambda_1, \ldots, \lambda_n)^T.$
\end{proof}

Since an $\ft$-module automorphism $f$ of $C(E)$ respects scaling, it induces
a well-defined map $\hat{f}$ on the projectivisation of $C(E)$. It turns out
that, in the case $E$ is an idempotent matrix of full rank, this map too is
affine linear.

\begin{corollary}
Let $E$ be an idempotent of rank $n$ in $M_n(\ft)$, let $A$ be an element
in the $\GreenH$-class of $E$ and let $\widehat{\phi_A}: \mathcal{P}C(E) \rightarrow \mathcal{P}C(E)$ denote the corresponding map induced by left multiplication by $A$. Then $\widehat{\phi_A}$ is a (classical) affine linear map on $\mathcal{P}C(E)$, regarded as a subset of $\mathbb{R}^{n-1}$.
\end{corollary}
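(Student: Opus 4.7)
The plan is to apply Theorem~\ref{thm_extendaffine}(ii) directly. That theorem establishes that $\phi_A : \ft^n \to \ft^n$ is itself a classical affine linear map of the form $\phi_A(x) = P \cdot x + \lambda$, where $P$ is the (classical) permutation matrix of some $\sigma \in S_n$ and $\lambda = (\lambda_1, \ldots, \lambda_n)^T \in \mathbb{R}^n$. Since classical affine linearity is preserved under the classically affine-linear quotient map defining the projectivisation, the induced map $\widehat{\phi_A}$ should inherit affine linearity; all that remains is to unwind the coordinates.

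Concretely, I would fix $x \in C(E)$ and denote its projective coordinates by $y = (y_1, \ldots, y_{n-1}) \in \mathbb{R}^{n-1}$, where $y_i = x_i - x_n$. It is convenient to also set $y_n := 0$, so that $x_i - x_n = y_i$ uniformly for all $1 \leq i \leq n$. Using the explicit form $\phi_A(x)_i = x_{\sigma(i)} + \lambda_i$ from Theorem~\ref{thm_extendaffine}(ii), the $i$th projective coordinate of $\widehat{\phi_A}(y)$ (for $1 \leq i \leq n-1$) is
$$\phi_A(x)_i - \phi_A(x)_n \;=\; (x_{\sigma(i)} - x_{\sigma(n)}) + (\lambda_i - \lambda_n) \;=\; (y_{\sigma(i)} - y_{\sigma(n)}) + (\lambda_i - \lambda_n),$$
which, under the convention $y_n = 0$, is a $\mathbb{Z}$-linear function of $y_1, \ldots, y_{n-1}$ plus a constant. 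Hence $\widehat{\phi_A}$ is a classical affine linear map on $\mathbb{R}^{n-1}$, as required.

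The main obstacle is really just a bookkeeping point rather than a mathematical difficulty: one must check that the substitutions $x_{\sigma(i)} - x_n = y_{\sigma(i)}$ and $x_{\sigma(n)} - x_n = y_{\sigma(n)}$ make sense even when $\sigma(i) = n$ or $\sigma(n) = n$. This is handled cleanly by the convention $y_n = 0$, after which all cases reduce to the single expression above, and the affine linearity of $\widehat{\phi_A}$ on $\mathcal{P}C(E) \subseteq \mathbb{R}^{n-1}$ is manifest.
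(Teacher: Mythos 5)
Your proposal is correct and follows essentially the same route as the paper: both apply Theorem~\ref{thm_extendaffine}(ii) to write $\phi_A(x) = P\cdot x + \lambda$ and then express $\phi_A(x)_i - \phi_A(x)_n = (y_{\sigma(i)} - y_{\sigma(n)}) + (\lambda_i - \lambda_n)$ in the projective coordinates $y_j = x_j - x_n$. The only difference is cosmetic: the paper splits into the cases $\sigma(n)=n$ and $\sigma(n)\neq n$ and writes down an explicit $(n-1)\times(n-1)$ matrix $B$, whereas your convention $y_n = 0$ handles both cases uniformly.
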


\begin{proof}
By Theorem~\ref{thm_extendaffine}, $\phi_A(x) = P \cdot x + \lambda$, where $P$ is a (classical) permutation matrix and $\lambda = (\lambda_1, \ldots, \lambda_n)^T$ is a constant vector. Let
$$P_{i,j} = \begin{cases}
1 &\mbox{ if }j=\sigma(i)\\
0 &\mbox{otherwise},
\end{cases}$$
for some permutation $\sigma \in S_n$. Then
$$\phi_A(x) = \left(\begin{array} {c}
x_{\sigma(1)}\\
\vdots\\
x_{\sigma(n)}\\
\end{array}\right) + \left(\begin{array} {c}
\lambda_1\\
\vdots\\
\lambda_n\\
\end{array}\right).
$$

We may identify $\pft^{(n-1)}$ with $\mathbb{R}^{(n-1)}$ via the map
$$(x_1, \ldots, x_n) \mapsto (x_1 -x_n,\ldots, x_{n-1}-x_n)$$
and hence
$$\widehat{\phi_A}\left(\begin{array} {c}
x_{1}-x_n\\
\vdots\\
x_{n-1}-x_n\\
\end{array}\right) = \left(\begin{array} {c}
x_{\sigma(1)}-x_{\sigma(n)}\\
\vdots\\
x_{\sigma(n-1)}-x_{\sigma(n)}\\
\end{array}\right) + \left(\begin{array} {c}
\lambda_1-\lambda_n\\
\vdots\\
\lambda_{n-1}-\lambda_n\\
\end{array}\right).
$$
If $\sigma(n)=n$ then it is immediate that $\widehat{\phi_A}$ is an affine linear map on $\mathcal{P}C(E)$ regarded as a subset of $\mathbb{R}^{n-1}$. Suppose that $\sigma(n)\neq n$. Then $\sigma(k)=n$ for some $k \in\{1, \ldots,n-1\}$. Let $B$ be the $(n-1)\times (n-1)$ matrix given by

$$B_{i,j} = \begin{cases}
1 & \mbox{ if } i \neq k \mbox{ and } j=\sigma(i),\\
-1& \mbox{ if } j=\sigma(n),\\
0&\mbox{ otherwise.}\\
\end{cases}$$
Then it is easy to check that
$$\widehat{\phi_A}\left(\begin{array} {c}
x_{1}-x_n\\
\vdots\\
x_{n-1}-x_n\\
\end{array}\right) = B \cdot \left(\begin{array} {c}
x_{1}-x_n\\
\vdots\\
x_{n-1}-x_n\\
\end{array}\right) + \left(\begin{array} {c}
\lambda_1-\lambda_n\\
\vdots\\
\lambda_{n-1}-\lambda_n\\
\end{array}\right).$$
\end{proof}

We shall use Theorem~\ref{thm_upstairs} to prove our main result; that is, 
that every maximal subgroup $H_E$ is isomorphic to a direct product 
$\mathbb{R} \times \Sigma$, where $\Sigma$ is a subgroup of $S_n$. We 
shall require the following two lemmas, concerning units that commute with 
elements of $M_n(\ft)$.

\begin{lemma}\label{lem_uniteig} Let $A \in M_n(\ft)$ and let $G \in M_n(\trop)$
be a unit which commutes with $A$. Then $G$ has only one eigenvalue.
\end{lemma}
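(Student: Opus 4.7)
The plan is to convert the commutation identity $GA = AG$ into a pointwise additive relation between entries of $A$ and the scalars defining $G$, and then to iterate this relation around the cycles of the underlying permutation of $G$. The essential feature that makes the argument work is that $A$ has all entries in $\ft$, so subtractions of entries of $A$ are legitimate operations in $\mathbb{R}$.

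Setting $G = D(\lambda_1, \ldots, \lambda_n) P_\sigma$ for some permutation $\sigma \in S_n$ and some scalars $\lambda_i \in \ft$, we have $G_{i,\sigma(i)} = \lambda_i$ and all other entries of $G$ equal to $-\infty$. A direct entrywise computation gives $(GA)_{i,j} = \lambda_i + A_{\sigma(i), j}$ and $(AG)_{i,j} = A_{i, \sigma^{-1}(j)} + \lambda_{\sigma^{-1}(j)}$. Equating these and replacing $j$ by $\sigma(j)$ yields the recurrence
$$A_{\sigma(i), \sigma(j)} - A_{i,j} = \lambda_j - \lambda_i \qquad \text{for all } i,j,$$
which is a legitimate equation in $\mathbb{R}$ precisely because $A \in M_n(\ft)$. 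Iterating this $m$ times produces
$$A_{\sigma^m(i), \sigma^m(j)} - A_{i,j} = \sum_{r=0}^{m-1}\lambda_{\sigma^r(j)} - \sum_{r=0}^{m-1}\lambda_{\sigma^r(i)}.$$

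Next, I would fix arbitrary indices $i$ and $j$ lying in cycles of $\sigma$ of lengths $k_i$ and $k_j$ respectively, and set $m = \operatorname{lcm}(k_i, k_j)$. Then $\sigma^m(i) = i$ and $\sigma^m(j) = j$, so the left-hand side vanishes, while the right-hand side equals $(m/k_j) S_j - (m/k_i) S_i$, where $S_\ell$ denotes the sum of the $\lambda$-values around the cycle containing $\ell$. This forces $S_i/k_i = S_j/k_j$; since $i$ and $j$ were arbitrary, every cycle of $\sigma$ has the same mean $\mu$. The eigenvalues of a tropical monomial matrix over $\trop$ are precisely the means of the cycles of its underlying permutation (each cycle supports an eigenvector which vanishes off that cycle, and conversely the support of any eigenvector is a union of $\sigma$-cycles, forcing $\mu$ to equal the mean of any cycle it meets), so $G$ has the unique eigenvalue $\mu$, as claimed.

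The only real obstacle is careful bookkeeping with the cycle structure of $\sigma$; there is no genuine difficulty, since once one spots that commutativity yields an iterable additive identity and that the finiteness of $A$'s entries permits subtraction in $\mathbb{R}$, the conclusion follows from an elementary manipulation of cycle sums.
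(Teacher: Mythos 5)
Your proof is correct and is essentially the paper's argument in unpacked form: the paper notes that some power $G^k$ of $G$ is diagonal, that it would inherit distinct eigenvalues from $G$, and that a non-scalar diagonal matrix cannot commute with a matrix all of whose entries are real --- which is exactly your telescoped identity $A_{\sigma^m(i),\sigma^m(j)} - A_{i,j} = \sum_r \lambda_{\sigma^r(j)} - \sum_r \lambda_{\sigma^r(i)}$ specialised to $m$ the order of $\sigma$. Your version makes explicit the two facts the paper leaves implicit (that real entries of $A$ permit subtraction, and that the eigenvalues of a monomial matrix are the cycle means of $\sigma$), but the mechanism is the same.
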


\begin{proof} Suppose false. It is easy to see that some power $G^k$ of $G$ is a diagonal matrix. Because $G$ has distinct eigenvalues, so does $G^k$, so $G^k$
cannot be a scaling matrix. But now it is easy to see that $G^k$ cannot commute with any matrix in $M_n(\ft)$, which contradicts the assumption that $G$ commutes with $A$.
\end{proof}

\begin{lemma}\label{lem_commute}Let $A \in M_n(\ft)$ and let $G$ and $H$ be a units which commute with
$A$. Suppose also that $G$ and $H$ have maximal cycle mean $0$. Then $GH$ has maximal cycle mean $0$.
\end{lemma}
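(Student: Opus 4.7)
The idea is to exploit the monomial structure of units in $M_n(\trop)$ together with the single-eigenvalue constraint from Lemma~\ref{lem_uniteig}. Every unit $G \in M_n(\trop)$ can be written uniquely as $G = D(\lambda)P_\sigma$ for some permutation $\sigma \in S_n$ and weight vector $\lambda \in \ft^n$, so the weighted digraph $\Gamma_G$ decomposes as a disjoint union of cycles, one per cycle of $\sigma$, the cycle $(i_1,\dots,i_d)$ contributing average weight $\tfrac{1}{d}(\lambda_{i_1}+\cdots+\lambda_{i_d})$.

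Each such cycle gives rise to an eigenvalue of $G$ equal to that cycle mean (with eigenvector supported on the cycle, obtained by telescoping). Since $G$ commutes with $A$, Lemma~\ref{lem_uniteig} tells us $G$ has only one eigenvalue, so all cycles of $\sigma$ share a common mean. Because the cycles of $\sigma$ partition $\{1,\dots,n\}$, this common value is necessarily $\tfrac{1}{n}\sum_i \lambda_i$, and it coincides with the maximum cycle mean of $G$. Hence the hypothesis that $G$ has maximum cycle mean $0$ translates into $\sum_i \lambda_i = 0$, and writing $H = D(\lambda')P_\tau$ similarly gives $\sum_i \lambda'_i = 0$.

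A direct computation shows that $GH = D(\mu)P_{\tau\sigma}$, where $\mu_i = \lambda_i + \lambda'_{\sigma(i)}$. Then $GH$ is again a unit of $M_n(\trop)$, and it commutes with $A$ because $G$ and $H$ do. So Lemma~\ref{lem_uniteig} applies once more to give that $GH$ has a single eigenvalue equal to its common cycle mean, which by the argument above equals $\tfrac{1}{n}\sum_i \mu_i$. But
$$\sum_i \mu_i \;=\; \sum_i \lambda_i + \sum_i \lambda'_{\sigma(i)} \;=\; \sum_i \lambda_i + \sum_j \lambda'_j \;=\; 0 + 0 \;=\; 0,$$
where in the middle equality I reindex using the fact that $\sigma$ is a bijection. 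Thus $GH$ has maximum cycle mean $0$, as required.

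The only potentially subtle point is the assertion that every cycle of $\sigma$ genuinely contributes an eigenvalue of $G$, so that Lemma~\ref{lem_uniteig} really forces equality of cycle means rather than some weaker condition. This is essentially built into the argument used to prove Lemma~\ref{lem_uniteig}: since $\sigma^m = \mathrm{id}$ for some $m$, the matrix $G^m$ is diagonal, with the $i$-th diagonal entry equal to $m$ times the mean of the cycle of $\sigma$ containing $i$; commutativity with $A \in M_n(\ft)$ forces $G^m$ to be a tropical scaling matrix, so these cycle means must already agree. One could, if desired, sidestep the eigenvalue language altogether and run this argument directly on $G$, $H$, and $GH$ to reach the same conclusion.
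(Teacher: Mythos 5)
Your proof is correct and follows essentially the same route as the paper's: both arguments use Lemma~\ref{lem_uniteig} to force all cycle means of $G$, $H$ and $GH$ to coincide, observe that the (classical) sum of the finite entries is a weighted sum of the cycle means and hence vanishes for $G$ and $H$, note that the finite entries of $GH$ sum to the sum of those of $G$ and $H$, and conclude that the common cycle mean of $GH$ is $0$. Your write-up merely makes the monomial decomposition $G = D(\lambda)P_\sigma$ and the reindexing explicit, which the paper leaves as ``a simple calculation.''
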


\begin{proof}
It follows from Lemma~\ref{lem_uniteig} that every cycle in the graphs 
corresponding to $G$ and $H$ has mean weight $0$. In particular, the
(classical) sum 
of all the finite entries in $G$ is equal to $0$, and the sum of the 
finite entries in $H$ is equal to $0$. By a simple calculation, the 
sum of the finite entries in the product $GH$ must also be $0$. Now, 
since the product $GH$ also commutes with $A$, applying 
Lemma~\ref{lem_uniteig} again yields that every cycle in $GH$ has the same 
average weight. Since the sum of the entries (which is 0) is a weighted 
sum of the cycle means, and the cycle means are all the same, we deduce 
that the cycle means are all 0. In particular, the maximum cycle mean is 
0.
\end{proof}

\begin{theorem}
Let $E$ be an idempotent of rank $n$ in $M_n(\ft)$ and let $G_E$ denote the
group of units commuting with $E$. Define
$R = \{\lambda \otimes I_n: \lambda \in \ft\}$
and $\Sigma = \{ G \in G_E: G \mbox{ has eigenvalue } 0\}$. Then $R$
and $\Sigma$ are subgroups of $G_E$, $R \cong \mathbb{R}$, $\Sigma$ is
a finite group embeddable in the symmetric from $S_n$, and $G_E \cong R \times \Sigma$.
\end{theorem}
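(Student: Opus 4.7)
The plan is to establish that $R$ and $\Sigma$ are subgroups of $G_E$ with trivial intersection and whose product fills $G_E$, and that $R$ is central in $G_E$; the direct product decomposition then follows by elementary group theory. For $R$, scalings commute with every matrix in $M_n(\trop)$, so $R$ lies in the centre of $G_E$, and the map $\lambda \mapsto \lambda I_n$ is visibly a group isomorphism $\mathbb{R} \to R$. For $\Sigma$, clearly $I_n \in \Sigma$; closure under products combines Lemma~\ref{lem_commute} (the maximum cycle mean of $GH$ is $0$) with Lemma~\ref{lem_uniteig} (which identifies this with the unique eigenvalue of $GH$, since $GH$ commutes with $E$); closure under inverses uses that $Gv = v$ implies $G^{-1}v = v$, so $0$ is an eigenvalue of $G^{-1}$, and by Lemma~\ref{lem_uniteig} applied to $G^{-1}$ (which still commutes with $E$) it is the unique one.

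Next, I would verify $R \cap \Sigma = \{I_n\}$ and $G_E = R\Sigma$. The scaling $\lambda I_n$ has unique eigenvalue $\lambda$, so membership in $\Sigma$ forces $\lambda = 0$, giving $R \cap \Sigma = \{I_n\}$. For the product, given $G \in G_E$ let $\mu$ denote its unique eigenvalue (Lemma~\ref{lem_uniteig}), and set $G' = (-\mu) \otimes G = (-\mu I_n) G$. Then $G'$ lies in $G_E$ and has eigenvalue $0$, hence $G' \in \Sigma$, and $G = (\mu I_n) G' \in R\Sigma$. Since $R$ is central and meets $\Sigma$ trivially, the standard internal direct product criterion yields $G_E \cong R \times \Sigma$.

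It remains to embed $\Sigma$ into $S_n$, which will also establish finiteness. Every unit in $M_n(\trop)$ factors uniquely as $D(\lambda_1, \ldots, \lambda_n) P_\sigma$, and the assignment $G \mapsto \sigma$ is a group homomorphism from the full unit group of $M_n(\trop)$ onto $S_n$. I would restrict this to $\Sigma$ and show that its kernel is trivial; the kernel consists of diagonal units $D \in G_E$ with eigenvalue $0$. The only real technical point, which I expect to be the main (if modest) obstacle, is showing that any diagonal $D = D(\lambda_1, \ldots, \lambda_n)$ commuting with $E$ must be a scaling: a direct comparison of $DE$ and $ED$ gives $\lambda_i \otimes E_{ij} = E_{ij} \otimes \lambda_j$ for all $i,j$, and since every entry of $E$ is finite — this is exactly where the hypothesis $E \in M_n(\ft)$ is essential — all $\lambda_i$ coincide. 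Thus the kernel lies in $R$, so intersected with $\Sigma$ it reduces to $R \cap \Sigma = \{I_n\}$, whence $\Sigma$ embeds in $S_n$ and the proof is complete.
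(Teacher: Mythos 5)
Your proof is correct and follows essentially the same route as the paper's: the same subgroups $R$ and $\Sigma$, the same use of Lemmas~\ref{lem_uniteig} and~\ref{lem_commute} for closure of $\Sigma$ and for the factorisation $G = (\mu\otimes I_n)G'$, and the same homomorphism $D(\lambda_1,\ldots,\lambda_n)P_\sigma \mapsto \sigma$ to embed $\Sigma$ in $S_n$. The only (welcome) difference is that you explicitly verify that the diagonal units commuting with $E$ are exactly the scalings, via $\lambda_i \otimes E_{i,j} = E_{i,j}\otimes\lambda_j$ and the finiteness of the entries of $E$ --- a fact the paper asserts without proof --- and you check triviality of the kernel restricted to $\Sigma$ directly rather than computing $\ker\phi = R$ on all of $G_E$ and passing to $G_E/R$.
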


\begin{proof}
It is easy to see that $R \unlhd G_E$. Moreover, the only diagonal matrices which commute with $E$ are those contained in $R$. Since $I_n$ has eigenvalue $0$ we have $I_n \in \Sigma$. Let $A, B \in \Sigma$. By
Lemma~\ref{lem_commute} we see that $AB \in \Sigma$ and it is also clear
that $A^{-1} \in \Sigma$, giving that $\Sigma \leq G_E$. Thus
$R, \Sigma \leq H_E$ and it is clear that $R \cap \Sigma = \{I_n\}$.
Let $G \in G_E$. By Lemma~\ref{lem_uniteig}, $G$ has a unique eigenvalue,
$\lambda$ say. Hence we may write $G=\lambda \otimes I_n \otimes G_{\lambda}$, where $G_{\lambda} \in \Sigma$. In other words, $G_E = R\Sigma$. It is clear that every element of $R$ commutes with every element of $\Sigma$, giving $G_E = R \times \Sigma$.

It is clear that $R \cong \mathbb{R}$. Moreover, from the definition of
matrix multiplication it is easy to show that the map
$$\phi \ : \ G_E \rightarrow S_n, \ D(\lambda_1, \ldots, \lambda_n) P_{\sigma} \mapsto \sigma$$
is a homomorphism of groups, with kernel the set of all diagonal matrices that commute with $E$. Thus ${\rm ker} \phi = R$ and hence $\Sigma \cong G_E/R \cong {\rm Im} \phi \leq S_n$.
\end{proof}

\begin{corollary}
Let $E$ be an idempotent of rank $n$ in $M_n(\ft)$ and let $H_E$ denote the $\GreenH$-class of $E$. There exists an element $x \in C(E)$ such that $x$ is an eigenvector for all $A \in H_E$.
\end{corollary}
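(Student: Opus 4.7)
The plan is to use the decomposition $G_E \cong R \times \Sigma$ established in the preceding theorem, where $R = \{\lambda \otimes I_n : \lambda \in \ft\}$ consists of tropical scalings and $\Sigma$ is a finite subgroup of $G_E$ whose elements all have eigenvalue $0$. Via the isomorphism $H_E \cong G_E$ from Theorem~\ref{thm_upstairs}, any vector $x \in C(E)$ that is fixed by every element of $\Sigma$ will automatically be a common eigenvector for $H_E$: the $\Sigma$-coordinate of a unit $G \in G_E$ acts trivially on $x$, while the $R$-coordinate contributes the eigenvalue. So the task reduces to producing a single vector in $C(E)$ that is $\Sigma$-invariant.

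To construct such a vector I would take any $y \in C(E)$ and form the tropical orbit sum
$$x \ = \ \bigoplus_{G \in \Sigma} G \otimes y.$$
The two key checks are: (i) $x \in C(E)$, which follows from the fact that each $G \in \Sigma \subseteq G_E$ commutes with $E$, so $E(Gy) = G(Ey) = Gy$, placing each summand (and hence, by closure of $C(E)$ under $\oplus$, the sum $x$) in $C(E)$; and (ii) $Hx = x$ for every $H \in \Sigma$, which follows because left multiplication by $H$ distributes over $\oplus$ and simply permutes the summands, as $G \mapsto HG$ is a bijection of $\Sigma$. To finish, given any $A \in H_E$, Theorem~\ref{thm_upstairs} together with the decomposition produces a unique $G = \lambda \otimes I_n \otimes G_0$ in $G_E$ with $A = EG$ and $G_0 \in \Sigma$, after which a short calculation using $Ex = x$ and $G_0 x = x$ yields $Ax = \lambda \otimes x$.

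There is essentially no obstacle here; this is the tropical analogue of the classical orbit-sum (or Reynolds operator) construction of a common invariant for a finite group action, made available precisely because the preceding theorem has already reduced $H_E$ to such a finite group up to scalings. The one subtlety worth flagging is that the sum is taken tropically ($\bigoplus = \max$), so the verifications above must invoke the distributivity of $\otimes$ over $\oplus$ rather than classical linearity, but this presents no difficulty.
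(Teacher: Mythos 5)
Your argument is correct, but it takes a genuinely different route from the paper. The paper passes to the projectivisation $\mathcal{P}C(E)$: the scaling subgroup $S = \{\lambda \otimes E\}$ acts trivially there, so the finite quotient $H_E/S$ acts on $\mathcal{P}C(E)$ by classical affine linear maps (via Theorem~\ref{thm_extendaffine}), and a fixed-point theorem of Day for finite groups of affine maps on compact convex sets then supplies a common fixed point, which lifts to a common eigenvector. You instead stay inside $C(E)$ and build the eigenvector explicitly as the tropical orbit-sum $x = \bigoplus_{G \in \Sigma} G \otimes y$; your verifications are sound ($Gy \in C(E)$ because $G$ commutes with $E$ and $Ey=y$; left multiplication distributes over $\oplus$ and $G \mapsto HG$ permutes the finite group $\Sigma$; and $A = E(\lambda \otimes I_n \otimes G_0)$ then gives $Ax = \lambda \otimes x$). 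Your approach is more elementary and constructive: it needs only the finiteness of $\Sigma$ and distributivity, avoiding both the topological fixed-point theorem and the convexity of $\mathcal{P}C(E)$ (which in the paper rests on the min-plus convexity of projective $n$-polytopes, Theorem~\ref{thm_minplusproj}), and it yields an explicit formula for the common eigenvector. What the paper's route buys is a geometric picture consistent with the affine-linearity results of Section~7, but as a proof of this corollary your orbit-sum argument is arguably cleaner.
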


\begin{proof}
Let $S = \{\lambda \otimes E: \lambda \in \ft\}$ and $R = \{\lambda \otimes I_n: \lambda \in \ft\}$. Since
the map $\gamma: G_E \rightarrow H_E$ given by $\gamma(G) = EG$ is an isomorphism of groups, we see that $S=\gamma(R)$. Hence $S$ is a normal subgroup of $H_E$ which is isomorphic to $\mathbb{R}$ and $H_E / S$ is isomorphic to  ${\rm Im} \phi \leq S_n$. It is clear that $S$ acts trivially on $\mathcal{P}C(E)$. Thus the quotient group $H_E / S$ acts on $\mathcal{P}C(E)$ by affine linear transformations.  Since $H_E / S$ is a finite group, we may apply a theorem of Day \cite[Theorem~1]{Day61} to find a fixed point $x \in \mathcal{P}C(E)$ common to all elements of $H_E / S$. In other words, $x$ is an eigenvector for all elements of $H_E$.
\end{proof}

\begin{corollary}
\label{cor_directprod}
Let $H$ be a maximal subgroup of $M_n(\ft)$. Then $H$ is isomorphic to a direct product of the form $\mathbb{R} \times \Sigma$ for some $\Sigma \leq S_n$.
\end{corollary}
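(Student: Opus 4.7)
The plan is to assemble this corollary directly from the machinery already established. Since every maximal subgroup of a semigroup is an $\GreenH$-class containing an idempotent, I may write $H = H_E$ for some idempotent $E \in M_n(\ft)$. Let $k$ denote the rank of $E$. The proof then proceeds in two reduction steps followed by an application of the preceding theorem.

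First I would invoke Theorem~\ref{thm_fullrank} to replace $E$ with a full rank idempotent in the smaller semigroup $M_k(\ft)$: more precisely, there exists an idempotent $F \in M_k(\ft)$ of rank $k$ such that $H_E \cong H_F$ as groups. (Strictly speaking, Theorem~\ref{thm_fullrank} is stated under the hypothesis that all diagonal entries of $E$ equal $0$, but the theorem immediately preceding it guarantees that we may replace $E$ with a $\GreenD$-equivalent idempotent $E'$ having this property, and $\GreenD$-equivalent idempotents have isomorphic maximal subgroups.)

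Next I would apply Theorem~\ref{thm_upstairs} to the full rank idempotent $F \in M_k(\ft)$, yielding a group isomorphism $H_F \cong G_F$, where $G_F$ is the group of units of $M_k(\trop)$ commuting with $F$. The theorem just proved above (the decomposition of $G_E$ for a full rank idempotent $E$) applies to $F$ in dimension $k$, and gives $G_F \cong R \times \Sigma$ where $R \cong \mathbb{R}$ and $\Sigma$ is a subgroup of $S_k$.

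Chaining these isomorphisms together yields $H = H_E \cong H_F \cong G_F \cong \mathbb{R} \times \Sigma$, with $\Sigma \leq S_k$. Finally, since $k \leq n$, the natural embedding of $S_k$ into $S_n$ (acting on the first $k$ coordinates) exhibits $\Sigma$ as a subgroup of $S_n$, completing the proof. There is really no substantive obstacle here since all the work has been done in the preceding sections; the only bookkeeping worth care is ensuring the reduction of Theorem~\ref{thm_fullrank} is legitimately applied (hence the preliminary replacement of $E$ by a $\GreenD$-equivalent idempotent with zero diagonal) and noting the trivial embedding $S_k \hookrightarrow S_n$ at the end.
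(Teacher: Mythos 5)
Your proposal is correct and is precisely the assembly the paper intends: the paper states this corollary without proof because it follows by chaining the reduction to a zero-diagonal $\GreenD$-equivalent idempotent, Theorem~\ref{thm_fullrank}, Theorem~\ref{thm_upstairs}, and the $R \times \Sigma$ decomposition, exactly as you describe. Your two points of bookkeeping (that $\GreenD$-equivalent idempotents have isomorphic maximal subgroups, and the embedding $S_k \hookrightarrow S_n$) are the right ones to flag, and both are sound.
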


\bibliographystyle{plain}

\def\cprime{$'$} \def\cprime{$'$}

\end{document}